\definecolor{dblue}{rgb}{0.0, 0.0, 0.55}
\newtheorem{theorem}{Theorem}[section]
\newtheorem{proposition}[theorem]{Proposition}
\newtheorem{remark}[theorem]{Remark}
\numberwithin{equation}{section}
\title[On generic representations over local fields of positive characteristic]{On generic representations of quasi-split reductive groups over local fields of positive characteristic}
\thanks{2020 \emph{Mathematics Subject Classification}. Primary 22E50.}
\author{H\'ector del Castillo}
\author{Guy Henniart}
\author{Luis Lomel\'i}
\date{\today}
\begin{document}

\begin{abstract}
Let $F$ be a locally compact non-Archimedean field, and $\bf G$ a connected quasi-split reductive group over $F$. We are interested in complex irreducible smooth generic representations $\pi$ of ${\bf G}(F)$. When $F$ has positive characteristic, we prove important properties which previously were only available for $F$ of characteristic 0. The first one is the tempered $L$-function conjecture of Shahidi, stating that when $\pi$ as above is tempered, then the $L$-functions attached to $\pi$ by the Langlands-Shahidi method have no pole for ${\rm Re}(s)>0$. We also establish the standard module conjecture of Casselman and Shahidi, saying that if $\pi$ is written as the Langlands quotient of a standard module, then it is in fact the full standard module. Finally, for a split classical group $\bf G$ we prove a useful result on the unramified unitary spectrum of ${\bf G}(F)$.
\end{abstract}

\maketitle

\section*{Introduction}

\parindent=17pt
\parskip=7pt

Let $F$ be a locally compact non-Archimedean field, and $\bf G$ a connected quasi-split reductive group over $F$. We are interested in complex irreducible smooth generic representations $\pi$ of ${\bf G}(F)$, in view of their applications to automorphic forms. The Langlands-Shahidi (in short, LS) method constructs $L$-functions and local factors of a complex variable associated to $\pi$. It was developed by Shahidi in the 1980’s and further \cite{Sha1990}, but was restricted to the case where $F$ has characteristic 0. Now, the third-named author has extended the LS method to positive characteristic in \cite{LomLS}. However, several basic facts, put forward by Shahidi and established subsequently for $F$ of characteristic 0, are not in general available in the literature when $F$ has positive characteristic, and it is our purpose to prove them here. We mention the special case when $\bf G$ is a non-split quasi-split special orthogonal group in $2n$ variables, where the results presented in this article are used in work of the first-named author \cite{dC}, who establishes the transfer to ${\rm GL}_{2n}$ of generic cuspidal automorphic representations of $\bf G$.

In this paper we establish three main results when $F$ has positive characteristic. The first one is the so-called \emph{tempered $L$-function conjecture}, stating that if the representation is tempered, then the $L$-functions produced by the LS-method are holomorphic on ${\rm Re}(s)>0$. The second one is \emph{the standard module conjecture}, saying that if one writes $\pi$ as the Langlands quotient of a standard module, then the full standard module is irreducible, hence it is $\pi$ itself. In fact, we deduce the latter result from the former following Heiermann and Opdam; see Section \ref{s:temp}. The third result states that if $\bf G$ is a split classical group and $\bf H$ is the split classical group where ${\bf M} = {\rm GL}_m \times {\bf G}$ is a maximal Levi subgroup, then for a unitary unramified $\pi$ of ${\bf G}(F)$ and an unramified tempered irreducible smooth representation $\tau$ of ${\rm GL}_m(F)$, the unramified irreducible component of the representation of ${\bf H}(F)$ parabolically induced from $\sigma = \tau \nu^s \otimes \pi$ of ${\bf M}(F)$, where $\nu = \left| \det(\cdot) \right|_F$, is not unitary if ${\rm Re}(s)>1$.

For the tempered $L$-function conjecture and the standard module conjecture, there are at least two natural methods of proof. The first one is the method of close local fields \`a la Kazhdan. It consists in transferring the results from characteristic zero to positive characteristic by using an isomorphism of the Hecke algebra in ${\bf G}(F)$ of a suitable congruence sugbroup of the Iwahori subgroup, onto the exactly analogous Hecke algebra in ${\bf G}'(F')$, where $F'$ is a locally compact field of characteristic 0 close enough to $F$, and ${\bf G}'$ is a kind of lift of $\bf G$ to $F'$. At present, suitable isomorphisms have been obtained by Ganapathy only when $\bf G$ is split \cite{Ga2015}, in which case $\bf G$ and ${\bf G}'$ come from the same Chevalley group, but see  partial results in \cite{Ga2019,Ga2021} towards the general case. When $\bf G$ is split, our first two theorems in characteristic $p$ were obtained with this approach in \cite{Lom2019}, using \cite{Ga2015}. Note that Ganapathy and Varma in \cite{GaVa2017} apply the Kazhdan method to the very important question of transporting the Langlands correspondence for split classical groups to the positive characteristic case.

It is the method of close local fields that we use to obtain our third result. However, we first work in characteristic zero and prove the non-unitarity criterion, since it is new for any non-archimedean local field. We rely on the classification of the unramified unitary dual of split classical groups, due to Mui\'c and Tadi\'c over a non-archimedean local field of characteristic $0$ \cite{MuTa2011}. After proving our theorem in characteristic 0, we then transfer \`a la Kazhdan to include local fields of characteristic $p$.

The second method of proof, which is the one we use for our first two results, consists in adapting to positive characteristic the proofs in characteristic zero. For the tempered $L$-function conjecture in characteristic 0, the method of Heiermann and Opdam \cite{HeOp2013}, exploits results of Silberger on the Plancherel measure, and of Heiermann on the location of discrete series, to translate the problem to a Hecke algebra defined by generators and relations, and then the result for that algebra comes from the fact that it is a Hecke algebra occurring in groups for which the conjecture is already known. Now the results of Silberger and Heiermann are valid in all characteristics, and the translation to an explicit Hecke algebra only necessitates basic facts on $L$-functions for cuspidal representations, due to Shahidi in characteristic 0 \cite{Sha1990}, and extended to positive characteristic by the third-named author \cite{LomLS}. In characteristic zero, Heiermann and Mui\'c reduced the standard module conjecture to the tempered $L$-function conjecture in \cite{HeMu2007}, and our results on $L$-functions allow the same derivation in positive characteristic.

The layout of the paper is as follows. In Section~\ref{s:results} we precisely state the results mentioned above. Section~\ref{s:setup} gathers basic notions and properties. It is in Section~\ref{s:temp} where we prove the first two results, after recalling how to define LS $L$-functions for possibly non-tempered generic representation. This involves an analytic continuation  from the tempered generic case to the  general generic case. The coherence of that process uses the tempered LS $L$-fuction conjecture. Section~\ref{s:L-function} explains how to extract LS $L$-functions from the LS $\gamma$-factor. In Section~\ref{s:unr}, we show that for unramified generic irreducible generic representations, the LS factors are the same as those of the corresponding Langlands parameters. This involves proving a folklore result (Proposition \ref{prop:unrtemp}) on unramified representation as Langlands quotients. The proof of that was explained to us by J.L. Waldspurger. In Section~\ref{s:nr:non-uni} we prove the third main result when $F$ has characteristic $0$ and in Section~\ref{s:Kazhdan} we transfer that result to positive characteristic via the method of close local fields. Finally, in the Appendix we explore the structure of unramified representations of $\operatorname{SL}_2$ and $\operatorname{SU}_{2,1}$.

\subsection*{Notation} In this paper, $F$ is a non-Archimedean locally compact field. We use the customary notation $\mathcal{O}_F$ for its ring of integers and $\mathfrak{p}_F$ for the maximal ideal of $\mathcal{O}_F$. We let $p$ be the characteristic of the residue field $\kappa_F = \mathcal{O}_F/\mathfrak{p}_F$, and let
$q_F = {\rm card}(\kappa_F)$. We write $\left| \cdot \right|_F$ for the normalized absolute value of $F$, where we sometimes use exponential notation \[ \left| x \right|_F = q_F^{-{\rm ord}_F(x)}, \]
with ${\rm ord}_F : F \rightarrow \mathbb{Z}$. For any $n
\in \mathbb N$,  we let $\nu_n=\nu$ be the character of ${\rm GL}_n(F)$ given by $\nu(g) = \left| \det(g) \right|_F$. We also choose a non-trivial character $\psi$ of $F$. For a complex variable $s$, we often write $t = q^{-s}$. Given an algebraic group $\bf H$ over $F$, we usually write $H={\bf H}(F)$, and if $\bf H$ is connected reductive we let ${\bf H}_{\rm ad}$ denote its adjoint group.

Let $\bf G$ be a connected reductive group over $F$. Most often we assume $\bf G$ quasi-split. We choose a maximal split torus $\bf S$ of $\bf G$, and write $\bf Z$ for its centralizer. When $\bf G$ is quasi-split $\bf Z$ is a torus that we write $\bf T$. We choose a minimal parabolic subgroup $\bf B$ of $\bf G$ with Levi subgroup $\bf Z$, and write $\bf U$ for its unipotent radical. We write $W$ for the (little) Weyl group $N_{G}({S})/{Z}$. We write $\Phi$ for the set of roots of $\bf S$ in $\bf G$, $\Delta$ for the subset of simple roots with respect to $\bf B$; we denote the set of positive roots by $\Phi^+$. For a simple root $\alpha$, we let $\alpha^\vee$ be the corresponding coroot. For a parabolic subgroup $\bf P$ of $\bf G$ containing $\bf Z$, we understand ${\bf P}={\bf MN}$ to mean that $\bf N$ is the unipotent radical of $\bf P$, and $\bf M$ the Levi subgroup of $\bf P$ containing $\bf Z$.

We also choose a separable algebraic closure $\overline{F}$ of $F$, and write $\Gamma_F$ for the Galois  group of $\overline{F}$ over $F$ and $\mathcal{W}_F$ for the Weil group of $\overline{F}$ over $F$. The $L$-group of $\bf G$ is a semidirect product
\begin{equation*}
   {}^LG = {}^LG^\circ \rtimes \mathcal{W}_F
\end{equation*}
where ${}^LG^\circ$ is the dual group of $\bf G$ (in \cite{BCor1979}  Borel uses $\Gamma_F$, but we prefer using  $\mathcal{W}_F$, as it is more customary nowadays). 
When $\bf P=MN$ is a parabolic subgroup of $\bf G$  containing $\bf B$, the $L$-group of $\bf P$ is then of the form ${}^LP = {}^LP^\circ \rtimes \mathcal{W}_F$, which is an extension of the Levi ${}^LM = {}^LM^\circ \rtimes \mathcal{W}_F$ by a unipotent radical ${}^LN = {}^LN^\circ$. We write ${}^L\mathfrak{n}$ for the Lie algebra of the unipotent radical ${}^LN$.

By a representation of a locally profinite group like $G$, we always mean a smooth complex representation of $G$. Let $P=MN$ be a parabolic subgroup of $G$ containing $B$. If $\sigma$ is a representation of $M$ and $\chi$ a character of $M$, extended to $P$ by acting trivially on $N$, then
\[ {\rm I}(\chi,\sigma) = {\rm Ind}_P^G(\chi\sigma) \] denotes the representation obtained via normalized  parabolic induction.

\subsection*{Acknowledgments} We would like to thank J.-L. Waldspurger for mathematical communications. The first author was supported by ANID Postdoctoral Project 3220656. The third author was supported in part by FONDECYT Grant 1212013.

\section{The results}\label{s:results}

 Assume $\bf G$ is quasi-split. We are interested in generic irreducible representations of $G$. For definiteness, we briefly recall the notion of genericity. 
We choose a pinning for the unipotent subgroups corresponding to the simple (absolute) roots of $\bf T$ in $\bf B$ over $\overline{F}$, and assume further that the pinning is equivariant with respect to the action of $\Gamma_F$. From $\psi$ and the pinning we obtain a character $\theta_\psi$ of $U$, which is trivial on the normal subgroup $\prod_{\alpha \in \Phi^+ \setminus \Delta} U_\alpha$. An irreducible smooth representation $\pi$ of $G$ is called $\theta_\psi$-generic if there is a non-trivial $U$-morphism from $\pi$ to $\theta_\psi$ (then those morphisms form a line). We say that $\pi$ is \emph{generic} if it is $\theta_\psi$-generic for some choice of pinning as above; varying $\psi$ amounts to changing the pinning.

The \emph{tempered $L$-function conjecture} (Theorem~\ref{TLC} below) was proved in \cite{HeOp2013} when $F$ has characteristic 0. It uses Langlands-Shahidi $L$-functions, which we now succinctly recall. Let ${\bf P}={\bf MN}$ be a maximal parabolic subgroup of ${\bf G}$ containing ${\bf B}$. Then ${}^LM$ acts by conjugation on ${}^L\mathfrak{n}$; an irreducible component $r$ of this action will be called a Langlands-Shahidi (LS) representation of ${}^LM$. For such an LS representation $r$ and an irreducible generic representation $\sigma$ of $M$, the LS method produces $\gamma$-factors $\gamma(s,\sigma,r,\psi)$ and $L$-functions $L(s,\sigma,r)$, which are rational functions in $t=q_F^{-s}$, for a complex variable $s$, see Section \ref{LS:local:factors}.

\begin{theorem}\label{TLC}
Let $\sigma$ be a tempered generic irreducible representation of $M$ and let $r$ be an LS representation of ${}^LM$. Then $L(s, \sigma, r)$ is holomorphic for ${\rm Re}(s)>0$.
\end{theorem}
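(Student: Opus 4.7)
The plan is to adapt the argument of Heiermann and Opdam \cite{HeOp2013} to positive characteristic. As the introduction emphasizes, the analytic ingredients used by them are valid in all characteristics once the LS $\gamma$-factors for cuspidal representations are available via \cite{LomLS}, so the work consists mainly in verifying that every intermediate step transfers.

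First, I would reduce to the case that $\sigma$ is a discrete series. A tempered generic irreducible representation of $M$ is fully induced from a discrete series generic representation $\sigma_0$ of a Levi $M_0 \subseteq M$; by inductivity of LS $\gamma$-factors, extended to positive characteristic in \cite{LomLS}, the $L$-function $L(s,\sigma,r)$ decomposes as a product of the $L$-functions $L(s,\sigma_0,r_i)$ attached to the irreducible components $r_i$ of the restriction of $r$ to ${}^LM_0$. Holomorphy for $\sigma$ thus reduces to holomorphy for $\sigma_0$.

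Second, I would translate the problem to one about an affine Hecke algebra with explicit parameters. Silberger's Plancherel theory expresses the Plancherel density $\mu(s,\sigma)$ as a product of the factors $\gamma(s,\sigma,r_i,\psi)\,\gamma(-s,\widetilde{\sigma},r_i,\psi)$, so poles of $L(s,\sigma,r)$ in the half-plane $\mathrm{Re}(s)>0$ correspond to prescribed zeros and poles of $\mu$. Heiermann's realization of discrete series by residual points then shows that the subquotient structure of the family $\{\sigma \otimes \nu^s\}$ is governed by an affine Hecke algebra $\mathcal{H}$ whose parameters are dictated by the $\mu$-function. These results of Silberger and Heiermann are valid in any characteristic; the only ingredient specific to our setting is the basic construction and multiplicativity of LS $\gamma$-factors for cuspidals, supplied by \cite{LomLS}. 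Finally, I would conclude by comparison: the Hecke algebra $\mathcal{H}$ produced above arises, with identical parameters, in a reductive group over a characteristic zero field for which the tempered $L$-function conjecture is already established by \cite{HeOp2013}; transporting the holomorphy statement across this isomorphism of parameter data gives the theorem.

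The main obstacle I anticipate is the careful bookkeeping required to ensure that every multiplicativity and inductivity relation between LS $\gamma$- and $L$-factors tacitly invoked by Heiermann and Opdam is indeed available via \cite{LomLS}, especially under changes of Levi subgroup and under restrictions of the adjoint action on ${}^L\mathfrak{n}$. A secondary point is verifying that the Hecke algebra produced in the second step really does occur for the reference group in characteristic zero covered by \cite{HeOp2013}; but this follows from the uniform nature of the parameter tables, which depend only on the $\mu$-function and not on the ambient field $F$.
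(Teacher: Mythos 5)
Your proposal follows the same route the paper takes: reduce to discrete series via multiplicativity, invoke Silberger's Plancherel theory and Heiermann's realization of discrete series as residual points (both characteristic-independent), translate to an affine Hecke algebra with explicit parameters, and conclude by comparing with the characteristic-zero case handled by Heiermann--Opdam. You correctly identify that the main new input is the LS theory for cuspidals in positive characteristic from \cite{LomLS}.

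There is, however, one concrete gap: you assert that ``the only ingredient specific to our setting is the basic construction and multiplicativity of LS $\gamma$-factors for cuspidals.'' That is not quite true. In establishing the numerical criterion that feeds into Theorem~4.2 of \cite{HeOp2013}, Heiermann and Opdam reduce by root-system combinatorics to a corank-two situation, and at one point they invoke Shahidi's Proposition~5.1 of \cite{Sha1990}, which is a local-global argument (globalizing a supercuspidal and using automorphic $L$-function theory) valid only for number fields, hence only when $F$ has characteristic $0$. To make the argument work over a local field of positive characteristic, this reference must be replaced with the globalization of supercuspidal representations over function fields established in \cite{GaLo2018}. Your ``main obstacle'' paragraph gestures at needing to audit the multiplicativity relations, but that is not where the issue lies; it lies in the local-global step, which requires a genuinely new substitute rather than a bookkeeping check. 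Aside from that, your remark that the Hecke-algebra comparison only depends on $q_F$ is correct and matches the paper's observation that one may take a $\mathfrak{p}$-adic field $F'$ with $q_{F'}=q_F$ to port Theorem~4.2 of \cite{HeOp2013}.
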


The \emph{standard module conjecture} (Theorem~\ref{SMC} below), proved when $F$ has characteristic zero in \cite{HeMu2007}, using \cite{HeOp2013}, does not require $L$-functions in its formulation. It uses the Langlands classification, expressing an irreducible smooth representation $\pi$ of $G$ as the unique quotient ${\rm J}(\chi,\sigma)$ of a  representation of the form ${\rm I}(\chi,\sigma)$, where $P=MN$ is a parabolic subgroup of $G$ containing $B$, $\sigma$ an irreducible tempered representation of $M$ and $\chi$ a character of $M$ with positive real values, which is furthermore strictly positive with respect to $N$. 
It is known that $\bf P$, $\chi$, and the isomorphism class of $\sigma$ are determined by $\pi$. The representation ${\rm I}(\chi,\sigma)$ is known as a standard module \cite{CaSh1998}.

\begin{theorem}\label{SMC}
Let $\pi$ be a generic  representation of $G$, written as the Langlands quotient ${\rm J}(\chi,\sigma)$ of ${\rm I}(\chi,\sigma)$ as above. Then ${\rm I}(\chi,\sigma)$ is irreducible, hence
\[ \pi \simeq {\rm I}(\chi,\sigma). \]
\end{theorem}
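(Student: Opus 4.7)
The plan is to reduce Theorem~\ref{SMC} to Theorem~\ref{TLC}, following the method of Heiermann and Mui\'c \cite{HeMu2007}. The mention in the introduction that ``our results on $L$-functions allow the same derivation in positive characteristic'' signals that the passage goes through provided the Langlands--Shahidi machinery of \cite{LomLS} furnishes the same rank-one pole analysis as in characteristic zero.

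First I would observe that since $\pi={\rm J}(\chi,\sigma)$ is $\theta_\psi$-generic and the unique Whittaker functional on ${\rm I}(\chi,\sigma)$ factors through its Langlands quotient (Rodier's heredity together with uniqueness of Whittaker models), the tempered representation $\sigma$ is itself generic with respect to the character of $U\cap M$ induced by $\theta_\psi$. Next I would invoke the criterion of Casselman--Shahidi \cite{CaSh1998}: for a generic standard module, ${\rm I}(\chi,\sigma)$ is irreducible if and only if the long intertwining operator
\[ A(\chi,\sigma,w_0) : {\rm I}(\chi,\sigma) \longrightarrow {\rm I}(w_0\chi, w_0\sigma) \]
is holomorphic at the given point, where $w_0$ is the minimal Weyl element sending the positive chamber attached to $P$ to its opposite. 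This reduces the problem to the holomorphy of a single intertwining operator.

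Then I would decompose $A(\chi,\sigma,w_0)$ through a reduced expression of $w_0$ as a product of simple reflections relative to the standard Levi, obtaining a composition of rank-one intertwining operators, each living inside a Levi subgroup $M_i$ of semisimple rank one over $M$ with an associated maximal parabolic and LS representation $r_i$. By Shahidi's formula for local coefficients, available in positive characteristic thanks to \cite{LomLS}, the possible pole of the $i$-th rank-one factor is controlled by a quotient of the form
\[ \frac{L(s_i, \sigma_i, r_i)}{L(1+s_i, \sigma_i, r_i)}, \]
where $\sigma_i$ is tempered and generic and $s_i=\langle\chi,\alpha_i^\vee\rangle$ is strictly positive, by the Langlands positivity of $\chi$ with respect to $N$. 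Theorem~\ref{TLC} then gives the holomorphy of the numerator at $s_i$; the denominator can only contribute zeros. Hence each rank-one factor is holomorphic, so is $A(\chi,\sigma,w_0)$, and ${\rm I}(\chi,\sigma)$ is irreducible, forcing $\pi\simeq{\rm I}(\chi,\sigma)$.

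The main obstacle is the bookkeeping: one must verify that along the chosen reduced decomposition the intermediate inducing data $\sigma_i$ remain simultaneously tempered and generic so that Theorem~\ref{TLC} applies at every step, and that the exponents $s_i$ are genuinely positive. Temperedness is standard from Harish-Chandra's theory of the Plancherel measure, and genericity is preserved by the normalized intertwiners by the Rodier-type arguments used above; the positivity of the $s_i$ is precisely the content of the Langlands hypothesis on $\chi$. With these pieces in place the reduction is formal, and this approach has the virtue of being insensitive to the characteristic of $F$ once \cite{LomLS} and Theorem~\ref{TLC} are in hand.
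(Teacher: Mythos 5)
The paper itself dispatches Theorem~\ref{SMC} in one short paragraph: it invokes the proof of Heiermann--Mui\'c \cite{HeMu2007}, and points out that the two ingredients of that proof --- Heiermann's theorem locating discrete series as residue points of the Plancherel measure \cite{He2004}, and the theory of $\gamma$-factors and $C$-functions from \cite{LomLS} --- are available when $F$ has positive characteristic. Your reconstruction, by contrast, follows the older Casselman--Shahidi route: reduce to holomorphy of the long intertwining operator $A(\chi,\sigma,w_0)$, factor it along a reduced word into rank-one operators controlled by ratios of tempered LS $L$-functions, and apply Theorem~\ref{TLC}. This is a genuinely different path from the one the authors take, and in fact your sketch never touches the ingredient the paper flags as the ``main input,'' namely Heiermann's classification of discrete series.

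Beyond the mismatch of routes, two points of your sketch are genuinely incomplete. First, you state the criterion ``${\rm I}(\chi,\sigma)$ is irreducible if and only if $A(\chi,\sigma,w_0)$ is holomorphic'' as if Casselman--Shahidi had proved it. What their paper actually provides is that the generic constituent of a standard module is a subrepresentation; to go from holomorphy to irreducibility one still must argue that the (suitably normalized) operator does not kill the Whittaker functional, i.e.\ one needs nonvanishing of the relevant local coefficient at the Langlands point --- a separate application of Theorem~\ref{TLC} --- before one can invoke uniqueness of Whittaker models and indecomposability of the standard module to conclude. Second, the assertion that the poles of each rank-one factor are controlled precisely by $L(s_i,\sigma_i,r_i)/L(1+s_i,\sigma_i,r_i)$ is Shahidi's normalization of intertwining operators, which for non-cuspidal tempered $\sigma$ is itself established via multiplicativity and a careful reduction to cuspidal supports; stating it as a given here skips what Heiermann--Mui\'c are precisely designed to handle. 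Your remarks that temperedness of the $\sigma_i$ and strict positivity of the $s_i$ must be checked are well taken, and both do hold (the $\sigma_i$ are Weyl conjugates of $\sigma$, and Langlands positivity of $\chi$ with respect to $N$ propagates along a reduced word), but as presented these are promissory notes rather than proofs. In short, the route you chose is plausible in outline but has real gaps, and the paper uses a different, more robust argument.
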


Our third main result concerns the case where $\bf G$ is a split classical group. Mui\'c has given a convenient classification of unramified irreducible representations of $G$ \cite{Mu2006}, valid if $F$ has characteristic different than $2$. Mui\'c and Tadi\'c used it to get a classification of the unramified unitary dual of $G$ \cite{MuTa2011} when $F$ has characteristic $0$, see also Barbasch \cite{BaMo1996}. We deduce the following.

\begin{theorem}\label{thm:non-uni}
Let $\bf G$ be a split classical group, $m$ a positive integer. Consider ${\rm GL}_m \times {\bf G}$ as a maximal Levi subgroup of a split classical group $\bf H$. Let $\pi$ be a unitary unramified irreducible representation of $G$ and $\tau$ a tempered unramified irreducible representation of ${\rm GL}_m(F)$. Then for ${\rm Re}(s)>1$, the unramified irreducible component of the parabolically induced representation
\[ I(s,\tau \otimes \pi) = \operatorname{Ind}_{P}^H(\nu^s\tau \otimes \pi) \]
of $H$ is not unitary.
\end{theorem}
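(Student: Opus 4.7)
The paper's own outline for this theorem prescribes a two-stage strategy: first prove it in characteristic zero, then transfer to positive characteristic by the close local fields method. I will follow that plan.

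\emph{Reduction to an explicit unramified principal series.} Since $\pi$ is unramified, it is the unique unramified subquotient of an unramified principal series $\operatorname{Ind}_{B_G}^G(\chi_\pi)$ for some unramified character $\chi_\pi$ of the maximal split torus of $G$. Because $\tau$ is tempered and unramified on $\mathrm{GL}_m(F)$, it is isomorphic to $\operatorname{Ind}_{B_m}^{\mathrm{GL}_m(F)}(\mu)$ for an unramified unitary character $\mu=\mu_1\otimes\cdots\otimes\mu_m$, and this induction is already irreducible. Induction in stages then identifies the unramified component $J(s)$ of $I(s,\tau\otimes\pi)$ with the unique unramified subquotient of
\[
\operatorname{Ind}_{B_H}^H\bigl(\nu^s\mu_1\otimes\cdots\otimes\nu^s\mu_m\otimes\chi_\pi\bigr).
\]
So everything reduces to a statement about an explicit unramified principal series of $H$.

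\emph{Characteristic zero: exclusion from the unramified unitary dual.} In characteristic $0$, the Mui\'c--Tadi\'c classification \cite{MuTa2011} describes the unramified unitary dual of every split classical group by explicit combinatorial data built from Speh-type segments and complementary-series exponents bounded by $1$ in absolute value. Unitarity of $\pi$ forces all real exponents appearing in $\chi_\pi$ to respect those bounds. The plan is then to put the Langlands data of $J(s)$ in Mui\'c--Tadi\'c form and verify that twisting the ``$\mathrm{GL}_m$-block'' by $\nu^s$ with $\mathrm{Re}(s)>1$ produces a real exponent strictly greater than $1$ in the resulting parametrization, so that $J(s)$ lies outside the unitary dual of $H$. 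An essentially equivalent route, which may be cleaner in practice, is to compute the exponents of the Jacquet module of $J(s)$ along $P$ and apply Casselman's unitarity criterion to derive a contradiction.

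\emph{Transfer to positive characteristic.} Once the characteristic-zero case is settled, one transfers the non-unitarity to positive characteristic by close local fields, as announced in Section~\ref{s:Kazhdan}. Since $\mathbf{H}$ is split and all representations involved are unramified, everything is detected on the Iwahori-fixed vectors, i.e.\ on modules over the Iwahori--Hecke algebra of $H$. Ganapathy's isomorphism \cite{Ga2015} identifies this Hecke algebra for $F$ with its analogue for a characteristic-zero field $F'$ sufficiently close to $F$ and coming from the same Chevalley datum; unitarity is encoded by a Hermitian form on the Iwahori-fixed vectors that is preserved under this identification, so non-unitarity over $F'$ transfers to non-unitarity over $F$.

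\emph{Main obstacle.} The delicate point is the combinatorics in the characteristic-zero step: one must check, uniformly across all cases in the Mui\'c--Tadi\'c classification, that the $\nu^s$-twist of the $\mathrm{GL}_m$-block cannot be ``absorbed'' into some Speh-like segment hidden in $\pi$ in such a way that the enlarged segment still fits inside the unitary region. The unitarity of $\pi$---which caps the real exponents inside $\chi_\pi$ at $1$---is precisely what prevents any such absorption and forces a surviving exponent of real part strictly bigger than $1$ in the Langlands data of $J(s)$.
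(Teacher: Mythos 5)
Your outline matches the paper's two-stage strategy (Mui\'c--Tadi\'c in characteristic zero, then close local fields), but the characteristic-zero step contains a genuine gap: you identify the ``absorption'' worry as the main obstacle and then assert that the unitarity of $\pi$ is what prevents it, but that is not the right explanation and you never actually resolve the worry. The correct resolution is different and cleaner. Mui\'c's classification of unramified irreducibles (not just unitary ones) is a \emph{bijection}: every unramified irreducible is $\pi(E,\rho)$ for a uniquely determined multiset $E$ of triples $(r,\xi,\alpha)$ with $\alpha>0$ and a uniquely determined negative representation $\rho$, where $\pi(E,\rho)$ is by definition the unramified component of the induction from the corresponding inducing data. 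Writing $\pi=\pi(E,\rho)$ and applying induction in stages, the unramified component $J(s)$ of $\operatorname{Ind}_P^H(\nu^s\tau\otimes\pi)$ is the unramified component of the induction from $(E',\rho)$ with $E'=E\sqcup\{(1,\eta_i,t)\}_{i=1}^m$, $t=\operatorname{Re}(s)$, so $J(s)=\pi(E',\rho)$ by definition, and uniqueness of the parametrization means no other datum can describe $J(s)$. Since $E'$ contains a triple with $\alpha=t>1$, the necessary condition for unitarity from Mui\'c--Tadi\'c (all $\alpha<1$) fails, and we are done. Nothing about unitarity of $\pi$ is used to block absorption; there is simply nothing to absorb because the parametrization is unique. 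Your framing suggests reasoning with Zelevinsky-type segments where merging is a real issue, and the result would likely not fall out so directly in that language.

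A second omission is the even orthogonal case. Mui\'c and Mui\'c--Tadi\'c work with $\mathrm{O}_{2n}$, not $\mathrm{SO}_{2n}$, and ``unramified'' there means having $\mathrm{O}_{2n}(\mathcal{O}_F)$-fixed vectors. Transferring the classification and the unitarity criterion from $\mathrm{O}_{2n}$ to $\mathrm{SO}_{2n}$ requires a separate argument (in the paper, comparing $\pi^+(\xi)$ with $\pi(\xi)$ and $\pi(\xi^\sigma)$, Proposition~\ref{prop:unrunitary}), which your proposal does not address. Finally, in the transfer step you say unitarity is ``preserved'' under Ganapathy's Hecke algebra isomorphism as if it were automatic; this is actually a theorem of Ciubotaru \cite{Ci2018} relating unitarity of a smooth representation to unitarity of the corresponding Hecke-algebra module, and it should be cited explicitly since the argument genuinely depends on it.
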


Our proof of Theorem~\ref{thm:non-uni} consists of two stages. On the one hand, it is Theorem~\ref{thm:zero:non-uni} over non-Archimedean local fields of characteristic zero. On the other hand, in Section \ref{s:Kazhdan} we use a comparison of local fields à la Kazhdan and transfer to the positive characteristic case.

\section{Mise en place}\label{s:setup}

In this section we recall and establish  basic notions and properties used throughout the article, specially for the proof of Theorem \ref{TLC} and Theorem \ref{SMC}.

\subsection{Root spaces} Let $\bf G$ be a connected reductive group over $F$, whose group of $F$-rational points is denoted by $G={\bf G}(F)$. We let \[X^*({\bf G})=\operatorname{Hom}_{F}({\bf G},\mathbb G_m),\] be the group of algebraic characters of $\bf G$ (i.e. algebraic group homomorphisms over $F$ into $\mathbb G_m$). It is a finitely generated free $\mathbb Z$-module. If $\overline F$ is a separable closure of $F$, then the module $X^*({\bf G}_{\overline F})$ carries a discrete linear action of $\Gamma_F=\operatorname{Gal}(\overline{F}/F)$. We note that $X^*({\bf G})$ is the module of fixed elements of $\Gamma_F$ in $X^*({\bf G}_{\overline F})$. Similarly, we let 
\[X_*({\bf G})=\operatorname{Hom}_{F}(\mathbb G_m,{\bf G}),\] 
be the group of algebraic cocharacters of ${\bf G}$. Finally, let $\left\langle \cdot\, , \cdot \right\rangle$ be the perfect pairing between characters and cocharacters. 

Let $\bf T_G$ be the maximal $F$-split torus in the center of $\bf G$. Restriction gives a homomorphism from $X^*({\bf G})$ to $X^*({\bf T}_{\bf G})$, which is injective with finite cokernel.  We let $a_{\bf G}$ be the dual, as a real vector space, of
\[ a_{\bf G}^*=X^*({\bf T_G})\otimes_{\mathbb Z} \mathbb R. \]
Moreover, $a_{\bf G}$ identifies with $X_*({\bf T_G}) \otimes_{\mathbb Z} \mathbb R$, via $\left\langle \cdot\, , \cdot \right\rangle$.  As in \cite{Wa2003} we define a homomorphism  \[H_G \colon G \to a_{\bf G}\] such that $|\chi(g)|_F = q^{-\langle \chi ,H_G(g)\rangle}$ for every $F$-rational character $\chi \in X^*({\bf G})$ and $g \in G$. Its image is a lattice in $a_{\bf G}$. 
If $\pi$ is a smooth representation of $G$ and $\nu \in a_{\bf G}^*\otimes_{\mathbb R}\mathbb C$, we denote by $\pi_\nu$ the smooth representation of $G$ defined by 
\[\pi_\nu (g) = q^{-\langle \nu, H_G(g)\rangle} \pi(g).\]

Let $G^1 = \ker H_G.$
 It is an open normal subgroup of $G$ containing all compact subgroups of $G$ \cite[V.2.3 Proposition]{Re2010}; in fact, it is generated by them \cite[Remarque 5.12.1, Partie II]{HeLe2017}.
The map
\begin{align}\label{eq:rootspace}
a_{\bf G}^*\otimes_{\mathbb R}\mathbb C&\to \operatorname{Hom}_{\mathbb Z}(G/G^1,\mathbb C^\times)\colon
(\nu \mapsto  (g\mapsto q^{-\langle \nu, H_G(g)\rangle})
\end{align}
is onto. That map has a discrete kernel, whereas its restriction to $a_{\bf G}^*$ yields a one-to-one map to the group of characters of $G$ trivial on $G^1$ with positive real values. More precisely $1_\nu$ is trivial if and only if $\nu$ belongs to  $\frac{2\pi }{\ln(q)}\operatorname{Im}(H_G)^*$, where $\operatorname{Im}(H_G)^*$ is the lattice in $a_{\bf G}^*$ dual to $\operatorname{Im}(H_G)$.

\label{subsec:orthogonal}In our notation, $\bf S$ is a maximal $F$-split torus in $\bf G$, $\bf Z$ its centralizer, and $\bf B$ a minimal  parabolic subgroup of $\bf G$, with Levi subgroup $\bf Z$ and unipotent radical $\bf U$. The group $\bf T_G$ is a subgroup of $\bf S$ and $X_*({\bf T_G})$ is the subgroup of $X_*({\bf S})$ made out of one parameter subgroups annihilated by all (simple) roots of $\bf S$ in $\bf U$.  Thus $a_{\bf G}$ is the subspace of $a_{\bf S}$ orthogonal to all (simple) roots of $\bf S$ in $\bf U$. It has a natural complement $a^{\bf G}$ in $a_{\bf S}$ spanned by the (simple) coroots of $\bf S$ in $\bf U$. Dually, $a_{\bf G}^*$ is the subspace of $a_{\bf S}^*$ orthogonal to all (simple) coroots of $\bf S$ in $\bf U$, and has a complement spanned by the (simple) roots. On the other hand, there is a natural projection
\begin{equation}\label{eq:proj}
  p: a_{\bf Z} \rightarrow a_{\bf S},
\end{equation}
obtained by averaging the Galois orbits. More precisely, we have that $\Gamma_F$ acts on $a_{\bf Z}$, and the action factors through a finite Galois extension $F'$ of $F$, then, we can take
\[ p(z) = \dfrac{1}{\left| {\rm Gal}(F'/F) \right|}\sum_{\sigma \in {\rm Gal}(F'/F)} \sigma(z), \]
for $z \in a_{\bf Z}$.

Let ${\bf P}=\bf{MN}$ be a parabolic subgroup of $\bf G$ containing $\bf Z$, and $\Delta_M$  the subset of $\Delta$ made of simple roots of $\bf S$ in ${\bf M}\cap {\bf U}$. The previous considerations apply with $\bf M$ in place of $\bf G$, where
\[ a_{\bf S} = a_{\bf M} \oplus a^{\bf M}. \]
Dually, $a_{\bf M}^*$ is the subspace of $a_{\bf S}^*$ orthogonal to all coroots $\alpha^\vee$ for $\alpha$ in $\Delta_M$.

\subsection{Intertwining operators and Plancherel measures}\label{ss:intertPlan}

Let us recall some notations about intertwining operators using the notation of \cite{Re2010}. Let ${\bf P}={\bf MN}$ and ${\bf Q}={\bf MU }$ be  parabolic subgroups of $\bf G$ containing $\bf Z$. We denote by $\overline{\bf P}={\bf M\overline{N}}$ the opposite parabolic of $\bf P$. For an irreducible $PQ$-regular representation $\pi$ of ${\bf M}(F)$, there is a unique non-zero element (up to a non-zero constant) in $\operatorname{Hom}(\operatorname{Ind}_P^G\pi,\operatorname{Ind}_Q^G\pi)$, called the intertwining operator $J_{Q|P}(\pi)$, such that for every $f\in \operatorname{Ind}_P^G\pi$ with $\operatorname{Supp}(f)\cup \overline{PQ}\subset PQ$ [\emph{loc.\,cit.}, Proposition VII.3.4],
\[J_{Q|P}(\pi)(f)(1)=\int_{\overline{N}\cap U}f(u)du.\]

Making use of the identification \eqref{eq:rootspace}, we can meromorphically extend the intertwining operator to $\nu \mapsto J_{Q|P}(\pi_\nu)$, $\nu \in a^*_{\bf M}\otimes \mathbb C$ \cite[Th\'eor\`eme IV.1.1]{Wa2003}.

If $\pi$ is tempered and $\eta$ a character of $M$, positive with respect to $N$ and trivial on $M^1$, then $\eta \pi$ is $P\overline{P}$-regular [\emph{loc.\,cit.}, Lemme VII.4.1]. Thus, we can define in this situation the intertwining operator $J_{\overline{P}|P}(\eta\pi)$. 
The representation $\operatorname{Ind}_P^G(\eta\pi)$ has a Langlands quotient that is moreover the image of the intertwining operator $J_{\overline P|P}(\eta\pi)$  [\emph{loc.\,cit.}, Th\'eor\`eme VII.4.2]. 

Finally, suppose that $\pi$ is of discrete series, then $J_{P|\overline{P}}(\pi_\nu)\circ J_{\overline{P}|P}(\pi_\nu)$ is a scalar. By definition, as $\nu$ varies this defines a meromorphic function whose inverse is up to a constant the Plancherel measure or the Harish-Chandra function $\nu \mapsto \mu(\pi_\nu)$, $\nu \in a^*_{\bf M}\otimes \mathbb C$ \cite[Section V.2]{Wa2003}.

 \subsection{Local coefficients and $\gamma$-factors} \label{LS:lc:gamma}

The LS local coefficient is defined in the following setting. Let 
 $({\bf G},{\bf M})$ be a pair of quasi-split connected reductive groups such that $\bf M$ is a Levi component for a maximal parabolic subgroup $\bf P$ of $\bf G$ over the non-archimedean local field $F$. Assume that $\bf P$ is a maximal proper parabolic subgroup of $\bf G$ containing $
\bf B$, hence $\bf P$ corresponds to $\Delta - \{ \alpha \}$ for a choice of simple root $\alpha$. Let $\rho_P$ be the sum of all roots of $\bf S$ in $\bf N$; it is a character of $\bf M$, hence gives a central cocharacter of ${}^LM$. Let
\begin{equation}\label{widealpha}
    \widetilde{\alpha} = \dfrac{\rho_P}{\left\langle \rho_P, p(\underline \alpha^\vee) \right\rangle} \in a_{\bf M}^*,
\end{equation}
where $\underline \alpha$ is an absolute root in $a^*_{\bf T}$ restricting to $\alpha$, $\left\langle \cdot\, , \cdot \right\rangle$ is the perfect pairing between roots and co-roots, $\underline \alpha^\vee$ is the coroot associated to $\underline \alpha$, and $p(\underline \alpha^\vee)$ is obtained via the projection of \eqref{eq:proj} to $a_{\bf S}$.

We choose the splitting of $\bf G$ (giving a splitting of $\bf M$) so that $\pi$ is $\theta_{M,\psi}$-generic, where $\theta_{M,\psi}$ is the restriction to $U \cap M$ of the character $\theta_\psi$ of $U$. There is a natural long element $w_0$ in $W$ attached to $\bf M$, and a way to choose a lift $\tilde{w}_0$ to $G$, see Section 2.1 of \cite{LomLS}. For that choice, $\theta_\psi$ and $\theta_{M,\psi}$ are $\tilde{w}_0$ compatible, i.e.,
\begin{equation}
    \theta_{M,\psi}(u) = \theta_\psi(\tilde{w}_0 u \tilde{w}_0^{-1}), \ u \in U \cap M,
\end{equation}
and to $\pi$ is attached a local coefficient
\[ C_\psi(s,\pi) = C_{\theta_\psi}(s\widetilde{\alpha},\pi,\tilde{w}_0), \]
introduced by Shahidi in \cite{Sha1981} using the multiplicity one property of Whittaker functionals. The LS local coefficient $C_\psi(s,\pi)$ is a rational function of $t=q^{-s}$, see Theorem~1.3 of \cite{LomLS}, which is valid over any non-archimedean local field $F$.

Now we turn our attention to $\gamma$-factors. But first, we recall some facts about LS representations. In our setting, with ${\bf P} = {\bf M}{\bf N}$ a maximal parabolic of $\bf G$ corresponding to a simple root $\alpha$, the attached LS representations of ${}^LM$ in ${}^L\mathfrak{n}$ are naturally indexed by integers from $1$ to some positive integer $m = m(G,P)$, so that $r = r_i$ for some $i$. Specifically, $r_i$ is the representation of ${}^LM$ on the subspace of ${}^L\mathfrak{n}$ spanned by the root subspaces corresponding to roots $\beta^\vee$ of ${}^LM$ (i.e. coroots of M) with $\left\langle \widetilde{\alpha},\beta^\vee \right\rangle =i$. In other words, seeing $\rho_P$ as a central cocharacter of ${}^LM$, $r_i(\rho_P(x))$ is the scalar endomorphism of $r_i$ given by $x^{i \left\langle \rho_P, p(\underline \alpha^\vee) \right\rangle}$. In general, since $r_i$ acts trivially in central elements of ${}^LG$, we have that for any $\nu \in a^*_{\bf M}\otimes \mathbb C$, and again seeing it as a central cocharacter of ${}^LM$, we have that $r_i(\nu(x))$ is the scalar endomorphism of $r_i$ given by $x^{i \left\langle \nu, p(\underline \alpha^\vee) \right\rangle}$.  This implies that $i\langle \nu, p(\underline \alpha^\vee)\rangle$ can be recovered solely from $r$, hence it does not depend on the situation $M\subset G$ that gives rise to it.

Now, from (3.11) of \cite{Sha1990} and (5.1) of \cite{LomLS}, we recursively define $\gamma$-factors via the equation
\begin{equation}\label{C:prod:gamma}
    C_\psi(s,\pi) = c \prod_{i=1}^m \gamma(is,\pi,r_i,\psi),
\end{equation}
where $c$ is a monomial in $t=q^{-s}$, which does not affect the location of zeros and poles.

The $\gamma$-factors satisfy the following twisting formula, for every $\nu\in a_{\bf M}^*\otimes \mathbb C$, we have 
\begin{equation}\label{eq:gamma:shift}
    \gamma(s,\pi_\nu,r_i,\psi)= \gamma(s+i \left\langle \nu,p(\underline \alpha^\vee) \right\rangle ,\pi,r_i,\psi).
\end{equation}
Let us see why this formula holds up to a monomial in $t=q^{-s}$, which is enough for the location of zeros and poles. From the definition of Shahidi's local coefficient \cite{Sha1981}, one has \[C_\psi(s,\pi_\nu,\psi)=C_\psi(s+\langle\nu,p(\underline \alpha^\vee)\rangle,\pi,\psi).\]
Using formula \eqref{C:prod:gamma}, we get 
\[\prod_i\gamma(is,\pi_\nu,r_i,\psi)=A\prod_i\gamma(is+i\langle\nu,p(\underline \alpha^\vee)\rangle,\pi,r_i,\psi),\]
where $A$ is a non-zero constant. By induction, assume that for all indices $i$ except one, say $i_0$, 
\[\gamma(s,\pi_\nu,r_i,\psi)=A_i\gamma(s+i\langle\nu,p(\underline \alpha^\vee)\rangle,\pi,r_i,\psi).\]
We need to prove that 
\[\gamma(s,\pi_\nu,r_{i_0},\psi)=A_{i_0}\gamma(s+i_0\langle\nu,p(\underline \alpha^\vee) \rangle,\pi,r_{i_0},\psi),\]
for $A_{i_0}=A/\prod_{i\neq i_0}A_i$.
From the inductive nature of the definition of $\gamma$-factors, the assumption above is true, since we can interpret the shift $i\langle\nu,p(\underline \alpha^\vee) \rangle$ solely in terms on $r_i$ and not on the  particular situation of $M$ and $G$, as mentioned above. The shift is entirely dictated by the way that the maximal split torus of ${}^LM$ acts in $r_i$.

Given an LS representation $r$, we sometimes find it useful to introduce the cocharacter
\begin{equation}\label{eq:delta_r}
\delta_r = jp(\underline \alpha^\vee),
\end{equation}
where $r = r_j$, $j$ the respective nilpotency class. As we remarked above, we have that for $\nu \in a_{\bf M} ^*\otimes\mathbb C$, $\left\langle \nu, \delta_r\right\rangle$ depends only on $r$ and not in the situation $M\subset G$. 

\section{Tempered $L$-functions and standard modules conjectures}\label{s:temp}

In this section we prove Theorems~\ref{TLC} and \ref{SMC}, by showing that the characteristic zero proofs of Heiermann and Opdam (for Theorem~\ref{TLC}), and Heiermann and Mui\'c (for Theorem~\ref{SMC}), carry over to the case of positive characteristic.

As already recalled in the introduction, for the tempered $L$-function conjecture in characteristic 0, the method of Heiermann and Opdam \cite{HeOp2013} exploits results of Silberger \cite{Si1979} on the Plancherel measure, and of Heiermann \cite{He2004} on the location of discrete series. The problem is translated to a Hecke algebra defined by generators and relations, where the result for that algebra comes from the fact that it is a Hecke algebra occurring in groups for which the conjecture is already known. Now, the results of Silberger and Heiermann are valid in all characteristics, and the translation to an explicit Hecke algebra only necessitates basic facts on $L$-functions for cuspidal representations, due to Shahidi in characteristic 0 and extended to positive characteristic by the third-named author.

In characteristic zero, Heiermann and Mui\'c reduced the standard module conjecture to the tempered $L$-function conjecture in \cite{HeMu2007}, and our results on $L$-functions allow the exact same derivation in positive characteristic. Let us now proceed, using mostly the same notation as \cite{HeOp2013}, without recalling all of it; hence, the reader has to rely on Sections 3 to 5 of [\emph{loc.\,cit.}] and should consult their introduction for a good summary of notation and facts. We focus on the places where arguments, or references, have to be supplied when $F$ has characteristic $p$.

\subsection{}\label{LS:local:factors}
Our first and main objective is to prove Theorem~\ref{TLC}, where ${\bf P}={\bf MN}$ is a  parabolic subgroup of ${\bf G}$ containing ${\bf B}$. Throughout this section, we work with the theory of $L$-functions and local factors for an LS representation $r$ and a generic tempered irreducible smooth representation $\pi$ of $M$. The goal under these assumptions is to show that $L(s,\pi,r)$ is holomorphic on ${\rm Re}(s)>0$.

Recall that the LS method first produces $\gamma$-factors $\gamma(s,\pi,r,\psi)$, and then $L$-functions and $\varepsilon$-factors are derived from them. More precisely, the local $L$-function for tempered $\pi$ is defined as follows: write $\gamma(s,\pi,r,\psi)$ as a product of a monomial in $t=q^{-s}$ times a product of factors $1-a_jt$, divided by a product of factors $1-b_k t$, where none of the $a_j$'s are equal to any of the $b_k$'s; then 
\begin{equation}\label{gamma:fe}
    L(s,\pi,r) = \dfrac{1}{\displaystyle \prod_j (1-a_jt)}.
\end{equation}
With this setup, Theorem~\ref{TLC} for $L(s,\pi,r)$ says that each $\left| a_i \right| \leq 1$. We observe that changing $\psi$ modifies the $\gamma$-factor by a monomial on $t$, so the $L$-functions are independent of $\psi$.

Another important factor produced by the LS method is the $\varepsilon$-factor, that can be defined via the following relation
\begin{equation}
    \gamma(s,\pi,r,\psi) = \varepsilon(s,\pi,r,\psi) \dfrac{L(1-s,\tilde{\pi},r)}{L(s,\pi,r)}.  \label{eq:gammaepsilonL-func}
\end{equation}
 Note that the functional equation for $\gamma$-factors 
\[\gamma(s,\pi,r,\psi) \gamma(1-s, \tilde{\pi},r,\psi^{-1})=1,\]
 implies the functional equation of $\varepsilon$-factors
 \[\varepsilon(s,\pi,r,\psi) \varepsilon(1-s, \tilde{\pi},r,\psi^{-1})=1.\]
 Furthermore, assuming that $L(s,\pi,r)$ is holomorphic for ${\rm Re}(s)\geq1/2$ (which it is implied by the Theorem \ref{TLC}), then $\varepsilon$ is a monomial in $t=q^{-s}$. In other words the product of the factors $(1-b_jt)^{-1}$ is $L(1-s,\tilde{\pi}, r)$
up to a monomial in $t$.

 Let us go back to the our goal.  In terms of $\gamma$-factors, saying that $L(s,\pi,r)$ is holomorphic for ${\rm Re}(s)>0$ is equivalent to saying that $\gamma(s,\pi,r,\psi)$ is non-zero for ${\rm Re}(s)>0$, including poles. Therefore, to prove Theorem \ref{TLC}, one reduces to the case where $\bf P$ is maximal proper by the multiplicativity property of $\gamma$-factors, identity (3.13) of \cite{Sha1990} and Theorem 5.1 (iv) of \cite{LomLS}. Moreover, using the same property and that a tempered representation is a subrepresentation of representation parabolically induced from discrete series representation, we may assume if convenient that $\pi$ is a discrete series representation.

\subsection{} The first basic result is that Theorem~\ref{TLC} holds when $\pi$ is unitary cuspidal. More precisely, the following may be found in Section 7 of \cite{Sha1990} and Section 5 of \cite{LomLS}.

\begin{proposition}\label{prop:L:cusp}
 Assume that $\pi$ is (unitary) cuspidal. Then $L(s,\pi,r_i) = 1$ if $i>2$. For $i=1, 2$, the inverse $L$-function can be written as a (possibly empty) product
 \begin{equation*}
     L(s,\pi,r_i)^{-1} = \prod (1-a_jt),
 \end{equation*}
where the $a_j$'s are complex numbers of absolute value $1$. In particular $L(s,\pi,r_i)$ has no pole on ${\rm Re}(s)>0$. Only one of $L(s,\pi,r_1)$ and $L(s,\pi,r_2)$ may have a pole at $s=0$. Moreover, this occurs exactly in the following situation: $\tilde{w}_0(M)=M$, $\tilde{w}_0(\pi) \simeq \pi$ and ${\rm Ind}_P^G(\pi)$ is irreducible.
\end{proposition}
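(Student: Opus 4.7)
The plan is to compile the proposition from Section~7 of \cite{Sha1990} and Section~5 of \cite{LomLS}, checking that each ingredient carries over from characteristic zero to positive characteristic. The starting point is the product formula \eqref{C:prod:gamma} relating $C_\psi(s,\pi)$ to the $\gamma$-factors $\gamma(is,\pi,r_i,\psi)$, combined with Silberger's identity from \cite{Si1979} (characteristic-free) expressing the Plancherel measure on the unitary axis in the form
\[ |C_\psi(s,\pi)|^2\,\mu(\pi_{s\widetilde\alpha}) = \text{positive constant}. \]

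For $i\geq 3$, I would invoke the multiplicativity of $\gamma$-factors (identity (3.13) of \cite{Sha1990}, extended as Theorem~5.1(iv) of \cite{LomLS}) together with Shahidi's analysis of the higher LS constituents $r_i$: these decompose inductively into Rankin--Selberg type situations between pieces of the inducing datum, and for unitary cuspidal $\pi$ the resulting $L$-factors are trivial by the Rankin--Selberg theory, which is now available in positive characteristic.

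For $i\in\{1,2\}$, the presentation $L(s,\pi,r_i)^{-1}=\prod_j(1-a_jt)$ is immediate from \eqref{gamma:fe}. To obtain the bound $|a_j|=1$, I would use that $\mu(\pi_{s\widetilde\alpha})$ is a rational function of $t$ whose poles in real $s$ are confined to $\mathrm{Re}(s)=0$ and whose real zeros occur at reducibility points of the induced family (classical Harish-Chandra/Silberger theory). The product $\prod_i\gamma(is,\pi,r_i,\psi)=c^{-1}C_\psi(s,\pi)$ then constrains the reciprocal roots $a_j$ to the unit circle; the inductive separation of the individual $r_i$-contributions from the global local coefficient (extended to positive characteristic in \cite{LomLS}) distributes the constraint cleanly among the factors. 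Any pole $t_j=a_j^{-1}$ of $L(s,\pi,r_i)$ then satisfies $|t_j|=1$, so no pole occurs on $\mathrm{Re}(s)>0$.

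For the behaviour at $s=0$, I would note that a pole of $L(s,\pi,r_i)$ there forces, via \eqref{eq:gammaepsilonL-func}, $\gamma(0,\pi,r_i,\psi)=0$; by \eqref{C:prod:gamma} this amounts to a zero of $C_\psi(s,\pi)$ at $s=0$, equivalently, via Silberger's identity, to a pole of $\mu(\pi_{s\widetilde\alpha})$ there. The Harish-Chandra criterion identifies this last condition with $\tilde w_0 M=M$, $\tilde w_0\pi\simeq\pi$, and $\mathrm{Ind}_P^G(\pi)$ irreducible, in which case the pole of $\mu$ has order exactly $2$. Consequently $C_\psi$ vanishes to order $1$ at $s=0$, and a degree count in $\gamma(s,\pi,r_1,\psi)\,\gamma(2s,\pi,r_2,\psi)$ shows that exactly one of the two factors absorbs the zero, yielding the stated dichotomy. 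The main obstacle will be to verify that each analytic input above genuinely carries over in positive characteristic: Silberger's identity, the reducibility--Plancherel correspondence, and, most delicately, the disentangling of individual $r_i$-contributions from $C_\psi$, which is the technical heart of Section~5 of \cite{LomLS}.
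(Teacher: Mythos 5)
The paper does not actually prove Proposition~\ref{prop:L:cusp}: the entire ``proof'' supplied is the sentence that the statement ``may be found in Section~7 of \cite{Sha1990} and Section~5 of \cite{LomLS}.'' So there is no in-paper argument to compare yours against, and the relevant comparison is with the cited sources. Your broad outline --- relate $C_\psi(s,\pi)$ to the Plancherel measure via Silberger's identity, use Silberger's and Harish-Chandra's structure theorems for $\mu(\pi_{s\widetilde\alpha})$ to bound its poles and zeros, then distribute those constraints among the constituents $r_i$ through Shahidi's recursive definition of $\gamma$-factors, and finally read off the behaviour at $s=0$ from the reducibility criterion for $\operatorname{Ind}_P^G(\pi)$ --- is indeed the skeleton of Shahidi's Section~7, and your closing remark correctly locates what has to be re-verified in positive characteristic (all of which \cite{LomLS} supplies). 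Your treatment of the $s=0$ dichotomy between $r_1$ and $r_2$ by a degree count in $\gamma(s,\pi,r_1,\psi)\gamma(2s,\pi,r_2,\psi)$ is also the right idea.

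The flaw is in your argument for $i>2$. You propose to use multiplicativity of $\gamma$-factors so that the higher $r_i$ ``decompose inductively into Rankin--Selberg type situations between pieces of the inducing datum,'' and then declare the resulting $L$-factors ``trivial by the Rankin--Selberg theory.'' But $\pi$ is assumed cuspidal: there is no inducing datum to decompose, and multiplicativity (Theorem~5.1(iv) of \cite{LomLS}) gives nothing new when the representation is already supercuspidal on $M$. Worse, even if an auxiliary Rankin--Selberg situation did appear, the claim of triviality is false: Rankin--Selberg $L$-factors of unitary cuspidal representations do have poles, e.g.\ $L(s,\pi\times\widetilde\pi)$ at $s=0$. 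What actually forces $L(s,\pi,r_i)=1$ for $i>2$ is the combination of (a)~Silberger's theorem that for cuspidal unitary $\pi$ the function $\mu(\pi_{s\widetilde\alpha})$ has at most two pairs of reciprocal poles, coupled by the functional equation, and (b)~Shahidi's recursive construction of $\gamma(is,\pi,r_i,\psi)$, in which each $r_i$ with $i\ge 2$ is realized with smaller index inside an auxiliary LS situation; together these show that only $r_1$ and $r_2$ can contribute non-monomial factors to the local coefficient. You should replace the Rankin--Selberg step by this degree/recursion argument from the Plancherel measure --- which, as you note elsewhere, is exactly the disentangling that is the technical heart of \cite{LomLS}.
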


Note the consequence used in \cite{HeOp2013}: for real $s$, there is only one index $i=1$ or $2$ such that $L(s,\pi,r_i)$ has a pole on the real axis, i.e. $\gamma(s,\pi,r_i,\psi)$ has a zero on the real axis, and that pole is at $s=0$. Indeed, if $s$ is real then $t=q^{-s}$ is real positive. And, we know that each $a_j$ is of absolute value $1$, then $1-a_jt$ is $0$ precisely when $a_j=1$ and $t=1$; in particular, when $s=0$. 

Note also that another consequence  of this result and the discussion after equation \eqref{eq:gammaepsilonL-func} is that, when $\pi$ is unitary cuspidal, the $\varepsilon$-factor is a monomial in $t=q^{-s}$ and the $\gamma$-factor verifies
\begin{equation}\label{gamma:e:L:cusp}
   \gamma(s,\pi,r,\psi) = \varepsilon(s,\pi,r,\psi) \dfrac{L(1-s,\tilde{\pi},r)}{L(s,\pi,r)},
\end{equation}
where $\Tilde\pi$ is the contragredient of $\pi$.

\subsection{}We are ready to prove Theorem \ref{TLC}. As we mention in Section \ref{LS:local:factors} we may and do assume that $\bf P$ is maximal in $\bf G$ and that $\pi$ is a discrete series. Since $\pi$ is generic, its cuspidal support is also  generic and there is parabolic subgroup ${\bf P}_1={\bf M}_1{\bf N}_1$ of $\bf G$ contained in $\bf P$, a  $\Theta_{M_1,\psi}$-generic unitary cuspidal representation $\sigma$ of $M_1$ and a character $\chi$ of $M_1$ with positive real values and trivial on $M_1^1$ such that $\pi$ is a component of $\operatorname{Ind}_{P_1\cap M}^N(\chi \sigma)$. In this situation we write $\chi=\chi_{\nu_\pi}$ for some element $\nu_\pi$ in $a_{{\bf M}_1}^{{\bf M},*}$. Heiermann and Opdam show that one can chose $\sigma$ and $\chi$ so that $\pi$ us a subrepresentation of $\operatorname{Ind}^M_{P_1\cap M}(\chi\sigma)$. Then they apply result of  Silberger \cite{Si1979}, valid in all characteristic, to study the element $\nu_\pi$, and they use previous results of Heiermann \cite{He2004}, again valid in all characteristic, to show that $\nu_\pi$ is a ``residue point'' of the Harish-Chandra function $\nu \mapsto \mu(\sigma_\nu)$. Directly form the definition of intertwining operator and local coefficient, we deduce in the same way that Shahidi does in \cite{Sha1981}, now in characteristic $p$, that \[\mu(\sigma_\nu)=C_\psi(\tilde w_0(\nu),\tilde w_0(\sigma),\tilde w_0^{-1})C_\psi(\nu,\sigma,\tilde w_0).\] 

Then, we again follow Heiermann and Opdam \cite{HeOp2013}, who use the relation between local coefficient and $\mu$ functions, and the multiplicativity property of the LS $\gamma$-factors, to get their Proposition 3.1. All the facts that they use being true when $F$ has characteristic $p$, thus this proposition is also valid in this case. In particular, there are numerical data $\epsilon_{\overline{\beta}}$ attached to $\sigma$ and $\chi$. 

Secondly, Theorem 4.2 in \cite{HeOp2013} concerns the abstract $C$-functions and $\gamma$-factors attached to a root system and some numerical data that we just encountered. More precisely, it gives a condition on these numerical data in order for $C$-functions to be holomorphic on the real numbers less than $0$ and $\gamma$-factors to be non-vanishing on the real numbers greater than $0$.  Its proof uses split reductive groups over $F$ (which in [\emph{loc.\,cit.}] has characteristic $0$), but those groups only enter through the number $q$, so we can instead use a $\mathfrak{p}$-adic field $F'$ with $q_{F'} = q$, to obtain the theorem in characteristic $p$.

\subsection{} 

Finally, let us say why Theorem 5.1 of \cite{HeOp2013} is valid in our case, yielding our Theorem~\ref{TLC}. The point now is to establish the numerical criterion in Theorem 4.2 of [\emph{loc.\,cit.}], which concerns numbers $\epsilon_\beta$ attached to the roots $\beta$ in the root system $\Sigma$ of $\bf G$. Let $\Sigma_1$ be the root system of ${\bf M}_1$. In Section 4 of [\emph{loc.\,cit.}], it is proved, essentially via combinatorics of root systems valid in all characteristics, that it is enough to treat the case where $\Sigma_1$ has corank 2 in $\Sigma$. One exception is for a local-global argument where Proposition 5.1 of \cite{Sha1990}, valid only when $F$ has characteristic $0$, is invoked. For $F$ of positive characteristic, one has to replace that reference with the main result of \cite{GaLo2018}. In Section 6 of \cite{HeOp2013}, the numerical criterion is proved by inspection of all the corank $2$ situations, after some standard reductions; that also uses only combinatorics of root systems, so all is valid for our $F$.

\subsection{} When $F$ has characteristic $0$, Theorem \ref{SMC} is deduced from Theorem \ref{TLC} by the results of Heiermann and Mui\'c \cite{HeMu2007}. Their main input is again the result of Heiermann on the location of discrete series, valid in all characteristics. Given the theory of $C$-functions and $\gamma$-factors in positive characteristic, as obtained following \cite{LomLS}, their proof goes through entirely and yields our Theroem \ref{SMC}.

\subsection{} Let us assume that $\bf G$ is split. Then the method of close local fields is available by \cite{Ga2015}. With that method, Lomelí establishes our Theorem \ref{TLC} in Section 5 of \cite{Lom2019}, and Theorem \ref{SMC} follows as above. In Theorem 5.6 of [\emph{loc.\,cit.}], a result on the generic unitary dual is transferred from the characteristic $0$ case \cite{LaMuTa2004}. We point out that it should be possible to transfer the full classification of Lapid, Mui\'c and Tadi\'c in this way; indeed, the criteria determining which case of the classification applies can be expressed in terms of $L$-factors, Section 2.4 of [\emph{loc.\,cit.}], and those are preserved through the comparison of close local fields as in Proposition 5.4 of \cite{Lom2019}.

\section{On Langlands-Shahidi Local $L$-functions}\label{s:L-function}
In this section, we carefully explain how to extract the LS $L$-function $L(s,\pi,r)$ from the LS factor $\gamma(s,\pi,r,\psi)$ following Shahidi's recipe, outlined in his fundamental paper \cite{Sha1990}. As observed in \cite{LomLS}, while passing from tempered to general generic representations via Langlands classification one needs to ensure that no cancellations occur in the resulting products of rational functions in order to properly define LS $L$-functions. A holomorphy condition on tempered LS $L$-functions for $\operatorname{Re}(s) \geq 1/2$ bridges this gap in the definition, which is more than ensured by the validity of the tempered $L$-function conjecture.

\subsection{}
We are in the situation of the main text and we keep the notation; in particular, $\bf G$ is a quasi-split connected reductive group over $F$. We let $r$ be an LS-representation of ${}^LG$ and set $t=q^{-s}$. Fix a generic irreducible representation $\pi$ of $G$. Then the $\gamma$-factor $\gamma(s,\pi,r,\psi)$ is defined, it is a rational function of $t$. The $L$-functions and $\varepsilon$-factors are derived from $\gamma$-factors, first when $\pi$ is tempered, then when $\pi$ is essentially tempered, and finally in general, using the Langlands classification. 

The following discussion helps us explain in Section \ref{s:unr} why for a $K$-unramified generic irreducible representation $\pi$, the $L$-function $L(s,\pi,r)$ is the same as that attached to the Langlands parameter of $\pi$, the equality being true by design for $\gamma(s,\pi,r,\psi)$.

\subsection{}\label{subsec:LSreview}
Let us start with the case where $\pi$ is tempered. We recall the notation introduced in Section \ref{LS:local:factors}. The factor $\gamma(s,\pi,r,\psi)$ is written as product of a monomial in $t$ times a product of factors $1-a_jt$, divided by a product of factors $1-b_k t$, where none of the $a_j$'s are equal to any of the $b_k$'s; and thus the $L$-function is given by
\begin{equation}
    L(s,\pi,r) = \dfrac{1}{\displaystyle \prod_j (1-a_jt)}.
\end{equation}

\subsection{}\label{A:shift}
Next is the case where $\pi$ is \emph{essentially tempered}, that is, of the form $\chi \pi_0$ where $\pi_0$ is tempered and $\chi$ is a character of $G$. We can take $\chi$ to have positive real values, since twisting by a unitary character does not affect temperedness, and then $\chi$ and $\pi_0$ are unique. The recipe of Shahidi is then to ``proceed by analytic continuation'', and that we have to explain because the factor $\gamma(s,\chi \pi_0,r,\psi)$ certainly depends on the character $\chi$ in an analytic, even rational, manner, but we know no way to consistently define a numerator and a denominator for a function of several real or complex variables, as is done above for a rational function of $t$.

As we review in Section \ref{LS:lc:gamma}, for any root $
\beta^\vee$ of ${}^LM$ that is in the space of $r$, there is a positive integer $j$ such that $\left\langle \widetilde{\alpha}, \beta^\vee \right\rangle = j$, and we let $\delta_r$ be as in \eqref{eq:delta_r}.

We have (up to a monomial) the following formula 
\begin{equation}
    \gamma(s,\chi\pi_0,r,\psi) = \gamma(s+ \left\langle  \chi,\delta_r \right\rangle ,\pi_0,r,\psi).
\end{equation}
We use then the same formula for the 
$L$-function, that is we let 
\[
L(s,\chi\pi_0 ,r)=L(s + \left\langle  \chi,\delta_r \right\rangle,\pi_0,r).
\] 
That formula holds in particular when $\chi$ is a unitary character (so that $\chi\pi_0$ is still tempered). That is unambiguous and the equality
\begin{equation} L(s,\chi\pi,r)=L(s+ \left\langle  \chi,\delta_r \right\rangle,\pi,r)
\end{equation}
also holds when $\pi$ is essentially tempered.

\subsection{}\label{subsec:LS-Lgeneral}
Finally, in the general case, we write $\pi$ as the Langlands quotient of a standard module, say ${\rm Ind}_P^G (\chi \tau)$ (by Theorem \ref{SMC}, $\pi$ is the full standard module, which is irreducible). Then the $\gamma$-factor of $\pi$ is given as follows: write the restriction of $r$ to ${}^LM$ (where ${\bf P} = {\bf MN}$) as a direct sum of irreducible representations $r_{M,i}$. Each is an LS-representation, and the multiplicativity property of $\gamma$-factors says that
\begin{equation}\label{eq:gammamult}
\gamma(s,\pi,r,\psi) = \prod_i \gamma(s,\chi\tau,r_{M,i},\psi).
\end{equation}
We then define
\begin{equation}\label{eq:Lmult}
 L(s,\pi,r) = \prod_i L(s,\chi\tau,r_{M,i}).
\end{equation}
\begin{remark}
The restriction of $r$ to ${}^LM$ is semisimple. Indeed, its restriction to $\widehat{M} = {}^LM^\circ$ certainly is semisimple because $\widehat{M}$ is a complex reductive group and $r$ is algebraic, and then its restriction to ${}^LM$ is semisimple because $r(\widehat{M})$ has finite index in $r({}^LM)$, see \cite{He2001}.
\end{remark}

\begin{remark}
Let $\tau$ be a tempered irreducible representation of $M$, and assume that $\pi$ is the generic irreducible component of ${\rm Ind}_P^G(\tau)$. Then, we have the multiplicativity property of \eqref{eq:gammamult}.  Because of the tempered $L$-function conjecture (Theorem \ref{TLC}), the same property as \eqref{eq:Lmult} holds for $L$-functions.
In fact, for that property it is enough to know that the functions $L(s, \tau, r_{M,i})$ and $L(s,\tilde{\tau},r_{M,i})$ have no pole for ${\rm Re}(s) \geq 1/2$. Indeed, that is sufficient to imply that there is no cancellation between the polynomials in $t$ of
\[ \dfrac{1}{L(s,\tau,r_{M,i})} \quad \text{and} \quad  \dfrac{1}{L(1-s,\tilde{\tau}, r_{M,i'})}, \]
for different $i,i'$.
\end{remark}

\begin{remark}
For $L$-functions, the multiplicativity works when the induced representation is the standard module, but not necessarily in general. Indeed, already in the simple example of ${\bf G} = {\rm GL}_2 \times {\rm GL}_1$ seen as a block diagonal Levi subgroup of ${\rm GL}_3$. In this case, for an irreducible generic representation of ${\rm GL}_2(F)$ and a character $\chi$ of $F^\times$, the $L$-function $L(s,\pi \otimes \chi, r)$ is simply the Jacquet-Langlands (or Godement-Jacquet) $L$-function $L(s,\chi^{-1}\pi)$ with $r$ given by
the standard representation of ${}^LG={\rm GL}_2(\mathbb{C})$. If we take for $\pi$ the Steinberg representation $\rm St$ of ${\rm GL}_2(F)$ and for $\chi$ the trivial character, then $\rm St \otimes 1$ is the irreducible generic component of a principal series ${\rm Ind}_B^G (\tau)$, where ${\bf B}= \bf{TU}$ is the Borel subgroup of $\bf G$, and $\tau$ is essentially tempered but not tempered, but the $L$-function $L(s,{\rm St})$ is not equal to $L(s,\tau, r') L(s,\tau,r'')$ where $r'$ and $r''$ are the two characters components of the restriction of $r$ to ${}^LT$. Indeed, there is cancellation between the denominators and numerators of the two factors $\gamma(s,\tau,r',\psi)$ and $\gamma(s,\tau,r'',\psi)$: concretely $\bf T$ is ${\rm GL}_1^3$ and $r'$ is the character $(x_1,x_2,x_3) \mapsto x_1x_3^{-1}$ of $(\mathbb{C}^\times)^3$, whereas $r''$ is the character $(x_1,x_2,x_3) \mapsto x_2x_3^{-1}$. As a representation of $T=(F^\times)^3$, $\tau$ is the character $(a_1,a_2,a_3) \mapsto \left| a_1a_2^{-1} \right|_F^{1/2}$, and we have \[ \dfrac{1}{L(s,\tau,r')} = 1-q^{-1/2}t \quad \text{and} \quad \dfrac{1}{L(s,\tau,r'')}=1-q^{1/2}t. \]
On the other hand, we have
\[ \dfrac{1}{L(s,\tilde{\tau}, r')} = 1-q^{1/2}t \quad \text{and} \quad \dfrac{1}{L(s,\tilde{\tau}, r'')} = 1-q^{-1/2}t, \]
so
\[ \dfrac{1}{L(1-s,\tilde{\tau}, r')} = 1-q^{3/2}t^{-1} \quad \text{and} \quad \dfrac{1}{L(1-s,\tilde{\tau},r'')} = 1-q^{1/2}t^{-1}, \]
and we conclude
\[ \dfrac{1}{L(s,{\rm St})} = 1-q^{1/2}t. \] 
\end{remark}

\section{On unramified representations  and its $L$-functions}\label{s:unr}

In this section we first review the basic about unramified representations and give a description of their standard modules. Then, we calculate the LS $L$-functions for unramified generic irreducible representation $\pi$, following the discussion in Section \ref{s:L-function}. Indeed, we are going to prove that it is equal to what to expect, i.e.  the $L$-function $L(s,\pi,r)$ attached to the unramified Langlands parameter of $\pi$.

\subsection{Unramified  characters}
 A character of $G$ is (often) called unramified if it is trivial on $G^1$. Such characters of $G$ are smooth. However, because of the following remark, we prefer to speak of characters of $G$ that are trivial on $G^1$, and not of unramified characters of $G$.

\begin{remark}\label{rmk:G1-unr}
Assume that $\bf G$ is unramified, that is quasi-split and split over an unramified extension. Let $K$ be a hyperspecial maximal compact subgroup of $G$. An irreducible representation of $G$ is $K$-unramified (or $K$-spherical) if it has non-zero vectors fixed by $K$. A smooth character of $G$, seen as a representation of dimension $1$, is $K$-unramified exactly when it is trivial on $K$. Clearly a character trivial on $G^1$ is $K$-unramified, but the converse is not necessarily true, even for split groups. For ${\bf G}={\rm PGL}_2$, the special maximal compact subgroups of $G$ are all hyperspecial; they are conjugates of $K={\rm PGL}_2(\mathcal O_F)$; and, the subgroup $G'$ they generate is made out of elements with determinant a unit in $F^\times/(F^\times)^2$, whereas $G^1=G$. Thus, the unique non-trivial character of $G$ trivial on $G'$ is $K$-unramified for any choice of hyperspecial maximal compact subgroup $K$ of $G$, but not trivial on $G^1$.
\end{remark}

When $G$ is unramified, we specify $K$-unramified (or $K$-spherical) representations, unless there is no ambiguity, or the choice of $K$ has no influence.

\subsection{Special maximal compact subgroups and unramified representations}\label{subsec:unrmaxcpt}

Let $K$ be a special maximal compact subgroup of $G$ corresponding to a special point of the apartment of $S$ in the building of $G$. Then $Z \cap K$ is the unique maximal compact subgroup of $Z$, equal to the group $Z^1 = \ker H_Z$. There is thus no ambiguity in speaking of unramified characters of $Z$ for characters trivial on $Z^1$ (see Remark \ref{rmk:G1-unr}). If $\chi$ is such a character, the parabolically induced representation
\[ {\rm Ind}_B^G(\chi)\] is usually called an unramified principal series; it has a unique $K$-unramified
component. Writing $\pi(\chi)$ for the isomorphism class of that component we get a bijection 
\begin{equation}\label{eq:pi:chi}
    \chi \mapsto \pi(\chi)
\end{equation}
from $W$-orbits of unramified characters of $Z$ onto isomorphism classes of $K$-unramified irreducible representations of $G$.

Let $K'$ be another special maximal compact subgroup  of $G$. Assume first that $K'$ is $gKg^{-1}$, a conjugate  of $K$ in $G$. A vector $v$ in a representation $V$ of $G$ is fixed by $K$ if and only if $g\cdot v$ is fixed by $K'$, so that the above classification is true also for $K'$, and the $K'$-unramified component is the same as the $K$-unramified one. For a general $K'$, the corresponding special point is the translate, by an element $g$ of $G$, of a special point in the apartment attached to $S$, so $K'$ is conjugate via $g$ to a special maximal compact subgroup in good position with respect to $S$, and consequently the classification also holds for $K'$.

\begin{remark}
Without further assumptions on $\bf G$, it is not true that special maximal compact subgroups are conjugate under ${\bf G}_{ad}$, nor even under $\operatorname{Aut}(G)(F)$, as the example of ${\bf G}={\bf U}_{2,1}$ already shows (cf. Appendix~\ref{SL:SU}). When $\bf G$ is unramified, however, hyperspecial maximal compact subgroups of $G$ are conjugate under $G_{ad}$ \cite[Section 2.5]{Ti1979}.
\end{remark}

\subsection{Langlands quotient and unramified representations} Assume now that $\bf G$ is unramified, and let $K$ be a hyperspecial maximal compact subgroup of $G$, corresponding to a hyperspecial point in the apartment of $S$ in the building of $G$. Since $\bf G$ is quasi-split, ${\bf Z} = {\bf Z_G}({\bf S})$ is in fact a torus and thus we will denote it by $\bf T$, as in Section \ref{subsec:Kazhdan}. Given an unramified character $\chi$ of $T$, let $\pi(\chi)$ be the corresponding $K$-unramified isomorphism class of \eqref{eq:pi:chi}.  In this section we want to express $\pi(\chi)$ in  terms of the Langlands classification.  This will be used in Section \ref{subsec:unrLS} to relate the LS $L$-function of $\pi(\chi)$ with the $L$-function of its Galois parametrization. We are grateful to Waldspurger for discussions on that question, and the essence of the argument.

Given an unramified character $\chi$ of $Z$, we write
\[ \chi=|\chi|\chi^u, \]
so that $\chi^u$ is a unitary character of $T$. Since, for $w \in W$ we have $|\chi^w|=|\chi|^w$ and $\pi(\chi)$ only depends on the $W$-orbit of $\chi$,  we may (and do) assume that $|\chi|$ is in the closed positive Weyl chamber, that is corresponds to $\nu \in a_{\bf T}^*$ with $\langle \nu,\alpha^\vee\rangle$ non-negative for every simple root $\alpha \in \Delta$ (note that $\bf S$ is the maximal $F$-split subtorus of $\bf T$, so that $a_{\bf T}^*$ identifies with $a_{\bf S}^*$).

\begin{proposition}\label{prop:extension}
Let $\chi$ be an unramified character of $T$. Let $\theta$ be the set of simple roots $\alpha$ in $\Delta$ such that  $\langle\nu,\alpha^\vee\rangle=0$, and let ${\bf P}={\bf MN}$ be the parabolic subgroup of $\bf G$ containing $\bf B$ such that $\theta$ is the set of the simple roots of $\bf S$ in $\bf M$. Then there is a unique character $\eta$ of $M$, with positive real values, restricting to $|\chi|$ on $T$. The character $\eta$ is positive with respect to $N$.
\end{proposition}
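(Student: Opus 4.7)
The plan is to transport everything to the linear-algebra side via the bijection \eqref{eq:rootspace} (applied both to $M$ and to $T$), after which the proposition becomes essentially tautological. Since $\bf T$ is a torus we have $a_{\bf T}^* = a_{\bf S}^*$, so $|\chi|$ corresponds under \eqref{eq:rootspace} for $T$ to a unique $\nu \in a_{\bf S}^*$, and the closed positive Weyl chamber hypothesis reads $\langle \nu,\alpha^\vee\rangle \geq 0$ for all $\alpha \in \Delta$. The key observation is that by the very definition of $\theta$, $\nu$ is orthogonal to every coroot $\alpha^\vee$ with $\alpha \in \theta = \Delta_M$, so $\nu$ in fact lies in the subspace $a_{\bf M}^* \subset a_{\bf S}^*$ described in Section~\ref{subsec:orthogonal}.

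Having $\nu \in a_{\bf M}^*$, I would apply \eqref{eq:rootspace} to $M$ to produce a character $\eta = \eta_\nu$ of $M$ with positive real values, trivial on $M^1$. I would then verify $\eta|_T = |\chi|$ via the compatibility of the Harish-Chandra maps: for $t \in T$ and any $\chi' \in X^*({\bf M})$, the equality $|\chi'(t)|_F = q^{-\langle \chi', H_M(t)\rangle} = q^{-\langle \chi', H_T(t)\rangle}$ tensored with $\mathbb{R}$ yields $\langle \mu, H_M(t)\rangle = \langle \mu, H_T(t)\rangle$ for every $\mu \in a_{\bf M}^*$, in particular for $\mu = \nu$. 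Hence $\eta(t) = q^{-\langle \nu, H_M(t)\rangle} = q^{-\langle \nu, H_T(t)\rangle} = |\chi|(t)$.

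For uniqueness, any character $\eta'$ of $M$ with positive real values is trivial on every compact subgroup of $M$ (because $\mathbb{R}_{>0}$ is torsion-free), hence on $M^1$ since the latter is generated by its compact subgroups. So $\eta' = \eta_{\nu'}$ for a unique $\nu' \in a_{\bf M}^*$, and the constraint $\eta'|_T = |\chi|$ pins down $\nu' = \nu$ by injectivity of \eqref{eq:rootspace} for $T$. Finally, positivity of $\eta$ with respect to $N$ means $\langle \nu, \alpha^\vee\rangle > 0$ for every $\alpha \in \Delta \setminus \theta$, which is immediate from the closed-Weyl-chamber inequality $\langle \nu,\alpha^\vee\rangle \geq 0$ combined with the definition of $\theta$ as the exact zero locus of these pairings.

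There is no real obstacle here; the argument is essentially bookkeeping. The only point requiring care is the Harish-Chandra compatibility used to verify $\eta|_T = |\chi|$, which amounts to a direct chase through the restriction map $X^*({\bf M}) \hookrightarrow X^*({\bf T})$ and the identifications $a_{\bf M}^* \subset a_{\bf S}^* = a_{\bf T}^*$.
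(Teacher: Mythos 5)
Your argument is correct and is essentially the same as the paper's: both identify $\nu \in a_{\bf T}^* = a_{\bf S}^*$ corresponding to $|\chi|$, observe that the vanishing of $\langle\nu,\alpha^\vee\rangle$ for $\alpha\in\theta$ places $\nu$ in the subspace $a_{\bf M}^*$, and then read off $\eta$ via the exponential map \eqref{eq:rootspace} for $M$, with positivity w.r.t.\ $N$ following from the strict inequalities $\langle\nu,\alpha^\vee\rangle>0$ for $\alpha\notin\theta$. You add two useful explicit verifications that the paper leaves implicit: the Harish-Chandra compatibility $\langle\nu,H_M(t)\rangle=\langle\nu,H_T(t)\rangle$ that justifies $\eta|_T=|\chi|$, and the observation that any character of $M$ with positive real values is automatically trivial on $M^1$ (torsion-free target, $M^1$ generated by compacts), which is needed to get uniqueness among \emph{all} positive-real-valued characters and not merely those a priori trivial on $M^1$.
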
 

\begin{proof}
 As we saw in Section \ref{subsec:orthogonal}, the orthogonal complement in $a_{\bf T}^*$ of the $\alpha^\vee$ for $\alpha\in \theta$  is $a_{\bf M}^*$, so an element  $\nu \in a_{\bf T}^*$ such that $\langle\nu,\alpha^\vee\rangle= 0$ for $\alpha \in \theta$ corresponds (uniquely) to a character $\eta$ of $M$ with positive real values, which necessarily restricts to $\chi$ on $T$. 
Since $\langle\nu,\alpha^\vee\rangle$ is positive for any simple root $\alpha \not \in \theta$, the character $\eta$ is positive with respect to $N$. That proves Proposition \ref{prop:extension}.
\end{proof}

Let us recall the intertwining operators introduced in Section \ref{ss:intertPlan}. Let ${\bf P}={\bf MN}$ be a parabolic subgroup of $\bf G$ containing $\bf B$ and $\overline{\bf P}$ the opposite parabolic of $\bf P$. For a tempered irreducible representation $\tau$ of ${\bf M}(F)$ and $\eta$ a positive with respect to $N$ unramified character of $M$,  we consider the intertwining operator given by 
\[J_{\overline P|P}(\eta\tau): \operatorname{Ind}_P^G(\eta\tau) \to \operatorname{Ind}_{\overline P}^G(\eta\tau).\]
Recall also that the representation $\operatorname{Ind}_P^G(\eta\tau)$ has a Langlands quotient that is moreover the image of the intertwining operator $J_{\overline P|P}(\eta\tau)$ (See again Section \ref{ss:intertPlan}).

\begin{proposition}\label{prop:unrtemp}
  Let $\tau=\pi_M( \chi^u)$ be the $K\cap M$-spherical irreducible component of $\operatorname{Ind}_{B\cap M}^M(\chi^u)$. Then $\tau$ is a tempered representation of $M$ and $\pi(\chi)$ is the Langlands quotient of $\operatorname{Ind}_P^G(\eta\tau)$, where $\eta$ is as in Proposition \ref{prop:extension}.
\end{proposition}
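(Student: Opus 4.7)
The plan is to combine induction in stages with the uniqueness of the spherical constituent, and then identify this constituent as the Langlands quotient via a non-vanishing argument for an intertwining operator. I organize the argument in three stages.

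\textbf{Temperedness of $\tau$.} Since $\chi^u$ is a unitary character of $T$, the principal series $\operatorname{Ind}_{B\cap M}^M(\chi^u)$ admits an $M$-invariant inner product, is admissible, and therefore decomposes as a finite direct sum of irreducibles. Each summand is tempered by Casselman's criterion, since its exponents along $T$ are $W_M$-translates of the unitary character $\chi^u$. In particular $\tau$ is tempered, and one fixes a decomposition $\operatorname{Ind}_{B\cap M}^M(\chi^u) \simeq \tau \oplus \tau'$.

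\textbf{Locating $\pi(\chi)$ inside $\operatorname{Ind}_P^G(\eta\tau)$.} Using $\chi = |\chi|\chi^u$ with $|\chi| = \eta|_T$ and the projection formula $\operatorname{Ind}_{B\cap M}^M(\eta|_T \cdot \chi^u) \simeq \eta \otimes \operatorname{Ind}_{B\cap M}^M(\chi^u)$, induction in stages yields
\[
\operatorname{Ind}_B^G(\chi) \simeq \operatorname{Ind}_P^G(\eta\tau) \oplus \operatorname{Ind}_P^G(\eta\tau').
\]
The Iwasawa decomposition $G = PK$ gives a canonical identification between $K$-fixed vectors in an induced representation and $(K\cap M)$-fixed vectors in the inducing representation, using that $\eta$ is trivial on the compact subgroup $K \cap M \subset M^1$. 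Since $\tau$ is the unique $K \cap M$-spherical component of $\operatorname{Ind}_{B\cap M}^M(\chi^u)$, the one-dimensional $K$-line of $\operatorname{Ind}_B^G(\chi)$ sits entirely in the summand $\operatorname{Ind}_P^G(\eta\tau)$, so $\pi(\chi)$ is the unique $K$-spherical irreducible subquotient of $\operatorname{Ind}_P^G(\eta\tau)$.

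\textbf{Identification as Langlands quotient.} Because $\eta$ is strictly positive with respect to $N$ and $\tau$ is tempered, $\operatorname{Ind}_P^G(\eta\tau)$ is a standard module whose unique irreducible quotient is the image of the intertwining operator $J_{\overline P|P}(\eta\tau)$ recalled in Section~\ref{ss:intertPlan}. Both $\operatorname{Ind}_P^G(\eta\tau)$ and $\operatorname{Ind}_{\overline P}^G(\eta\tau)$ carry one-dimensional $K$-fixed subspaces; once the restriction of $J_{\overline P|P}(\eta\tau)$ to this line is shown to be non-zero, the Langlands quotient inherits a non-zero $K$-fixed vector and by irreducibility must coincide with $\pi(\chi)$.

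\textbf{Main obstacle.} The delicate point is the non-vanishing of $J_{\overline P|P}(\eta\tau)$ on the spherical vector. The approach is to avoid a direct Gindikin--Karpelevich computation by invoking transitivity of standard intertwining operators: $J_{\overline B|B}(\chi)$ factors, through induction in stages, as $J_{\overline P|P}(\eta\tau)$ (restricted to the spherical summand of Step 2) composed with the $G$-induction of an intertwining operator within $M$ applied to $\operatorname{Ind}_{B\cap M}^M(\chi^u)$. The inner factor carries the spherical vector of $\operatorname{Ind}_B^G(\chi)$ into the spherical line of $\operatorname{Ind}_P^G(\eta\tau)$ precisely because $\tau$ is the spherical summand; and the overall composition $J_{\overline B|B}(\chi)$ identifies the spherical lines on the two sides up to a non-zero scalar (classical Macdonald theory, valid in all residue characteristics). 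Consequently $J_{\overline P|P}(\eta\tau)$ cannot annihilate its spherical line either. The very reason for isolating the tempered component $\tau$ upstream is to bypass the normalization subtleties inside $M$ that arise when $\chi^u$ is non-regular.
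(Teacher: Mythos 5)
Your overall architecture matches the paper's: prove temperedness of $\tau$, identify the $K$-spherical line of $\operatorname{Ind}_B^G(\chi)$ inside the summand $\operatorname{Ind}_P^G(\eta\tau)$, and conclude by showing $J_{\overline P|P}(\eta\tau)$ is non-zero on that line (so that the Langlands quotient, which is the image of this operator, is spherical and hence equals $\pi(\chi)$). The paper phrases this last non-vanishing as its own Proposition~\ref{prop:intertnonzero}, and that is exactly where your argument breaks down.

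The paper does \emph{not} avoid the Gindikin--Karpelevich/Macdonald computation; it performs it, but carefully, for the \emph{intermediate} operator $J_{B_1|B}(\chi)$ with $B_1=(B\cap M)\overline{N}$, via the commutative diagram $\operatorname{Ind}_P^G(\eta\tau)\hookrightarrow\operatorname{Ind}_B^G(\chi)$ over both rows. The Casselman $c$-factor for that operator is a product only over roots $\alpha$ of $S$ in $\mathfrak n$, and for those one has $|\chi(a_\alpha)|_F<1$ because $\eta$ is strictly positive with respect to $N$; each factor is then visibly finite and non-zero. Your route instead invokes the \emph{full} operator $J_{\overline B|B}(\chi)$ and asserts that it acts on the spherical line by a non-zero scalar by ``classical Macdonald theory.'' That assertion is false in general: the full $c$-function involves factors for roots $\alpha$ inside $M$, where $|\chi(a_\alpha)|_F=1$ since $\eta$ is orthogonal to $\Delta_M^\vee$ and $\chi^u$ is unitary. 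When $\chi^u$ is not regular (for instance $\chi^u$ trivial, so $\chi(a_\alpha)=1$ on every $M$-root), the $c$-function has a pole and $J_{\overline B|B}(\chi)$ is not even defined; one therefore cannot factor it and deduce non-vanishing of the piece $J_{\overline P|P}(\eta\tau)$. Ironically, you state at the end that isolating $\tau$ is meant to ``bypass the normalization subtleties inside $M$ that arise when $\chi^u$ is non-regular,'' but your non-vanishing argument re-imports precisely those subtleties by going through the longest Weyl element rather than $w^Gw^M$. To repair the step you need to compute (or cite the computation of) the $c$-constant for the intermediate operator, i.e.\ the product over $\alpha\in\mathfrak n$ only, which is what the paper does via \cite{Ca1980} together with the rank-one reductions worked out in Appendix~\ref{SL:SU}.

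One smaller remark: your Step 2 uses a decomposition $\operatorname{Ind}_{B\cap M}^M(\chi^u)\simeq\tau\oplus\tau'$ and deduces that the spherical line of $\operatorname{Ind}_B^G(\chi)$ lies in $\operatorname{Ind}_P^G(\eta\tau)$. That is correct and serves the same purpose as the paper's commutative diagram, though once non-vanishing of $J_{\overline P|P}(\eta\tau)$ on the spherical vector is known, the conclusion already follows from the uniqueness of the spherical irreducible subquotient of $\operatorname{Ind}_B^G(\chi)$, without an explicit direct-sum decomposition.
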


Before we prove Proposition \ref{prop:unrtemp}, we establish the following additional result.
\begin{proposition}\label{prop:intertnonzero}
The operator $J_{\overline P|P}(\eta\tau)$  is not zero on $K$-fixed vectors in $\operatorname{Ind}_P^G(\eta\tau)$.
\end{proposition}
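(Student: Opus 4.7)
The plan is to realize $\operatorname{Ind}_P^G(\eta\tau)$ as a direct summand of the unramified principal series $\operatorname{Ind}_B^G(\chi)$, with $\chi = \eta\chi^u$, and then to extract a Gindikin--Karpelevich computation for a partial intertwining operator on that principal series. By the Iwasawa decomposition $G = PK = \bar P K$ combined with the one-dimensionality of $\tau^{K\cap M}$ (spanned by the spherical vector $v_0$), both $\operatorname{Ind}_P^G(\eta\tau)^K$ and $\operatorname{Ind}_{\bar P}^G(\eta\tau)^K$ are one-dimensional. Choose generators $\phi$ and $\bar\phi$ with $\phi(1) = \bar\phi(1) = v_0$; by $K$-equivariance of $J_{\bar P|P}(\eta\tau)$, the claim reduces to showing that the scalar $c$ defined by $J_{\bar P|P}(\eta\tau)\phi = c\bar\phi$ is non-zero.

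To this end, observe that since $\chi^u$ is unitary, $\operatorname{Ind}_{B\cap M}^M(\chi^u)$ is a smooth unitary representation of finite length, hence semisimple: write it as $\tau \oplus \tau'$ with $(\tau')^{K\cap M} = 0$. Setting $\sigma := \operatorname{Ind}_{B\cap M}^M(\chi) = \eta\tau \oplus \eta\tau'$, induction in stages gives
\[ \operatorname{Ind}_B^G(\chi) \;\cong\; \operatorname{Ind}_P^G(\sigma) \;=\; \operatorname{Ind}_P^G(\eta\tau) \,\oplus\, \operatorname{Ind}_P^G(\eta\tau'), \]
and the unique $K$-spherical vector $\phi^B$ of $\operatorname{Ind}_B^G(\chi)^K$ lies in the first summand, coinciding with $\phi$ under this identification. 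Let $B' := (B\cap M)\bar N$, a Borel subgroup of $G$ with positive root system $\Phi_M^+\cup(-\Phi_N^+)$; symmetrically $\operatorname{Ind}_{B'}^G(\chi) \cong \operatorname{Ind}_{\bar P}^G(\sigma)$, with $K$-spherical vector $\phi^{B'}$ corresponding to $\bar\phi$. Now $J_{B'|B}(\chi)$ is defined by integration over $\bar U_B \cap U_{B'} = \bar N$, the same integral defining $J_{\bar P|P}(\sigma)$, so the two operators agree under the identification. As $J_{\bar P|P}(\sigma)$ respects the direct-sum decomposition and restricts on the first summand to $J_{\bar P|P}(\eta\tau)$, I conclude that $J_{B'|B}(\chi)\phi^B = J_{\bar P|P}(\eta\tau)\phi$.

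Finally, the Gindikin--Karpelevich formula applied to $J_{B'|B}(\chi)$ on the spherical vector gives
\[ J_{B'|B}(\chi)\phi^B \;=\; \left(\prod_{\alpha\in\Phi^+\setminus\Phi_M^+} \frac{1 - q^{-1}\chi(\alpha^\vee(\varpi))}{1 - \chi(\alpha^\vee(\varpi))}\right)\phi^{B'}, \]
the product running over the positive roots of $N$ --- precisely the roots flipped in passing from the positive system of $B$ to that of $B'$. For each such $\alpha$, the strict positivity of $\eta$ with respect to $N$ (Proposition~\ref{prop:extension}) gives $\langle\nu,\alpha^\vee\rangle > 0$, hence $|\chi(\alpha^\vee(\varpi))| = q^{-\langle\nu,\alpha^\vee\rangle} < 1$, so each numerator and each denominator is a non-zero complex number. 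Thus the product is non-zero, $c \ne 0$, and $J_{\bar P|P}(\eta\tau)\phi$ is a non-zero multiple of $\bar\phi$.

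The main technical point is precisely the use of a \emph{partial} Gindikin--Karpelevich formula: the factors over $\Phi_M^+$ (where $\chi^u(\alpha^\vee(\varpi))$ might equal $1$, producing potential poles or zeros in the full formula) are entirely avoided, and only the factors over $\Phi^+ \setminus \Phi_M^+$ --- whose denominators are bounded away from zero by the strict positivity of $\eta$ on $N$ --- enter the computation.
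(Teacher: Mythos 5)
Your approach is essentially the same as the paper's: pass to the unramified principal series $\operatorname{Ind}_B^G(\chi)$, reduce $J_{\overline P|P}(\eta\tau)$ on $K$-fixed vectors to the partial intertwining operator $J_{B'|B}(\chi)$ with $B'=(B\cap M)\overline N$, and read off a non-zero scalar from a product formula over the roots of $N$, using the strict positivity $\langle\nu,\alpha^\vee\rangle>0$ there. (The paper organizes the first reduction via the commutative diagram of \cite[Prop.\,VII.3.5\,(iii)]{Re2010} rather than via your direct-sum decomposition of $\operatorname{Ind}_{B\cap M}^M(\chi^u)$; both are fine.)

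The genuine gap is in the product formula itself. You invoke the Gindikin--Karpelevich formula
\[
\prod_{\alpha\in\Phi^+\setminus\Phi_M^+}\frac{1-q^{-1}\chi(\alpha^\vee(\varpi))}{1-\chi(\alpha^\vee(\varpi))},
\]
but that version is only correct when $\bf G$ is split. The proposition is stated for unramified quasi-split $\bf G$, where the relative root system can be non-reduced (e.g.\ $\mathrm{SU}_{2,1}$ gives $BC_1$) and the residue field extensions attached to the root subgroups vary. Casselman's Theorem~3.1 in \cite{Ca1980}, which is what the paper actually uses, gives
\[
c_{\tilde w}(\chi)=\prod_{\alpha>0,\,w\alpha<0}\frac{\bigl(1-q_{\alpha/2}^{-1/2}q_\alpha^{-1}\chi(a_\alpha)\bigr)\bigl(1+q_{\alpha/2}^{-1/2}\chi(a_\alpha)\bigr)}{1-\chi(a_\alpha)^2},
\]
with structure constants $q_\alpha,q_{\alpha/2}$ and elements $a_\alpha\in Z/Z^1$ that are \emph{not} simply $\alpha^\vee(\varpi)$ in general. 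A substantial part of the paper's proof is devoted to pinning these down by reduction to the rank-one groups $\mathrm{SL}_2$ and the unramified $\mathrm{SU}_{2,1}$ (computed in the Appendix), obtaining $q_\alpha>1$, $q_{\alpha/2}\geq 1$, and $|\chi(a_\alpha)|<1$ so that no factor vanishes. Your argument for non-vanishing of each factor is the right one, but as written it only applies to the split case; you would need to replace your formula by Casselman's and establish the required bounds on $q_\alpha$, $q_{\alpha/2}$, $a_\alpha$ to cover the quasi-split unramified groups the proposition is actually about.
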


\begin{proof}

We want to use the result in \cite{Ca1980}. So we start with some translation to that setting. First, let $B_1=(B\cap M)\overline{N}$ and note that $\eta\tau$ and $\chi$ are $P\overline{P}$ and $BB_1$-regular, respectively. We have the following commutative diagram \cite[Proposition VII.3.5 (iii)]{Re2010}
\[\begin{tikzcd}
\operatorname{Ind}_P^G(\eta \tau) \arrow[r,hookrightarrow]\arrow[d,"J_{\overline{P}|P}(\eta \tau)"'] & \operatorname{Ind}_{B}^G(\chi) \arrow[d,"J_{B_1|B}(\chi)"]  \\
\operatorname{Ind}_{\overline{P}}^G(\eta \tau) \arrow[r,hookrightarrow] &  \operatorname{Ind}_{B_1}^G(\chi).
\end{tikzcd}\]
So it is enough to study the operator $J_{B_1|B}(\chi)$.

Now, we translate $J_{B_1|B}(\chi)$ to the setting of \cite{Ca1980}. Since we are supposing that ${\bf G}$ is unramified, the smooth integral $\mathcal O_F$-model $\mathcal G$ such that $\mathcal G(\mathcal O_F)=K$ is reductive. This allows us to define $\overline N(\mathcal O_F)$. Now, by following [\emph{loc.\,cit.}], we choose our intertwining operator, defined in Section \ref{ss:intertPlan}, such that
\[ J_{B_1|B}(\chi)(f)(1)=\int_{\overline N(\mathcal O_F)}fdu=1, \]
where $f$ has support in $B\overline N(\mathcal O_F)$ and $f(bn_1)=\chi(b)$, for every $b\in B$ and $n_1\in \overline{N}(\mathcal{O}_F)$.  

Next, let  $w^G$ be  the longest element in $W$ and $\Tilde w$ be a representative of $w^Gw^M$ in $K$. Now let us consider
\[T(\Tilde w,\chi):=J_{B|B_1}(\Tilde w(\eta\tau))\circ t(\Tilde w)\colon \operatorname{Ind}^G_{B}(\chi)\to \operatorname{Ind}_{B_1}^G(\Tilde w(\chi)) \to \operatorname{Ind}^G_{B}(\Tilde w(\chi)),\] where $(t(\Tilde w)f)(g)=f(\Tilde w^{-1}g)$ and  $\Tilde w(\chi)(g)=\chi(\Tilde w^{-1}g\Tilde w)$. This variant is the one constructed in [\emph{loc.\,cit.}, Section 3]. For a $W$-regular character $\chi$ one can compute the effect of $T(\tilde{w},\chi)$ on $K$-fixed vectors and shows it is given by multiplication by a non-zero constant [\emph{loc.\,cit.}, Theorem 3.1]; more precisely, the constant is \[c_{\tilde{w}}(\chi)=\prod_{\alpha>0,w\alpha<0}\frac{(1-q_{\alpha/2}^{-1/2}q_\alpha^{-1}\chi(a_\alpha))(1+q_{\alpha/2}^{-1/2}\chi(a_\alpha))}{1-\chi(a_\alpha)^2},\]
where  $q_{\alpha/2},q_\alpha \in \mathbb Q_{>0}$ and $a_\alpha\in Z/Z^1$ are certain structure data coming from $G$. The product is over non-multipliable positive roots $\alpha$ of $S$ such that $w \alpha$ is negative, that is non-multipliable roots in $\mathfrak{n}$. The quantities $a_\alpha$, $q_\alpha$ and $q_{\alpha/2}$  are rather attached to  the affine root system attached to $G$, cf. [\emph{loc.\,cit.}, p.\,389, line 2 and p.\,394 line 1], and we may as well use non-divisible roots, which we do.
 From the proof of Theorem 3.1 in [\emph{loc.\,cit.}], the evaluation of the quantities $q_{\alpha/2},q_\alpha$ and $a_\alpha$ can be reduced to the case where $G$ is semisimple of rank one.
More precisely, for each $\alpha$ we have such a subgroup $G_\alpha$ of $G$, and the quantities 
depend only on the simply connected cover $\tilde{G}_\alpha$ of $G_\alpha$. 
Assume first that $\alpha$ is a simple root. Then the hyperspecial point $x_0$ of the apartment of $G$ giving rise to $K$ is hyperspecial also for $G_\alpha$ (the apartment of $G_\alpha$ being a line through $x_0$), so that $K \cap G_\alpha$ is a hyperspecial subgroup of $G_\alpha$. We are then reduced to the case of $G$ to be ${\rm SL}_2$ or the unramified quasi-split ${\rm SU}_{2,1}$ studied in Appendix~\ref{SL:SU}. 

In the first case $q_\alpha=q_F$ , $q_{\alpha/2}=1$ and $a_\alpha$ can be taken as ${\rm diag}(\varpi, \varpi^{-1})$ where $\varpi$ is a uniformizer of $F$ , cf. [\emph{loc.\,cit.}, Remarks 1.1 and 3.3].  In the second case, $a_\alpha$ can be taken to be ${\rm diag}(\varpi,1,\varpi^{-1})$, Then formula (15) of [\emph{loc.\,cit.}] gives $(q_\alpha)^2q_{\alpha/2}=q_F^4,$ whereas formula (14) yields 
$q_{\alpha/2}q_\alpha=q_F^3$.  Consequently $q_\alpha=q_F$ and $q_{\alpha/2}=q^2_F$.

Now, revert to the general case where $\alpha$ is not necessarily simple. Note that $\alpha$ can be 
conjugated by an element $w$ of the Weyl group to a simple root $\beta$.  We may choose $w$ in $K$, so that $w$ fixes the vertex $x_0$. The previous considerations then apply, and in particular $q_\alpha>1$, $q_{\alpha/2}\geq1$, and since the action of $a_\alpha$ on the root space attached to $\alpha$ is contracting, we get that $|\chi(a_\alpha)|_F<1$. That gives what we wanted.\end{proof}

\begin{proof}[Proof of Proposition \ref{prop:unrtemp}]
Since $\chi^u$ is unitary, $\tau$ is indeed tempered. Because $\eta$ is positive with respect to $N$, $\operatorname{Ind}_P^G(\eta\tau)$ does have a Langlands quotient and it is the image of the intertwining operator. 
Hence is $\pi(\chi)$. \end{proof}

\begin{remark}\label{rmk:specialdiscrete}
An important consequence of the claim is that $\pi(\chi)$ is essentially square integrable only if $G$ is a torus. When $\bf G$ is simple simply connected, that can be deduced from \cite[Theorem 5.1.2]{Mac1971}. 
Even for unramified $\bf G$ this fact does not necessarily hold if we take a special maximal compact subgroup $K$ instead of a hyperspecial one. That follows from §5.2 in [loc.\,cit.], and seems to have been noticed first in \cite{Mat1969}.
In fact those exceptional examples were the reason for the intervention of hyperspecial subgroups, rather than special subgroups, in the definition of the L-packet attached to an unramified parameter for unramified G \cite[p. 45]{BCor1979}.
\end{remark}

\subsection{Unramified Langlands-Shahidi L-functions} \label{subsec:unrLS} As in the previous section, assume now that $\bf G$ is unramified, and let $K$ be a hyperspecial maximal compact subgroup of $G$, corresponding to a hyperspecial point in the apartment of $S$ in the building of $G$. Moreover, suppose now that $\pi(\chi)$, the isomorphism class constructed in Section \ref{subsec:unrmaxcpt},  is generic. 

Now, let $r$ be an LS-representation of ${}^LG$. Then $\chi$ corresponds to an unramified homomorphism 
\[\sigma(\chi)\colon W_F\to  {}^LT,\] and \[\gamma(s, \pi(\chi), r, \psi)=\gamma(s, r \circ \iota\circ\sigma (\chi), \psi)\] by design, where $\iota$ is the inclusion of ${}^LT$ into ${}^LG$, so that $\iota \circ  \sigma(\chi)$ is the Langlands parameter (a homomorphism of $W_F$ to ${}^LG$) attached to $\pi(\chi)$.
The following proposition is important in Shahidi’s approach; it is implicit in \cite{Sha1990}.

\begin{proposition}One also has
\[L(s, \pi(\chi), r)=L(s, r \circ \iota \circ \sigma(\chi)).\]
\end{proposition}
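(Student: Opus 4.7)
The plan is to reduce the statement to the tempered unramified base case via the Langlands classification of $\pi(\chi)$ in Proposition~\ref{prop:unrtemp}, and then to promote the already-matching $\gamma$-factors to matching $L$-factors by means of the coprime extraction recipe of Section~\ref{subsec:LSreview}; the crucial input will be the absence of cancellation guaranteed by the tempered $L$-function conjecture (Theorem~\ref{TLC}).

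First, by Proposition~\ref{prop:unrtemp}, $\pi(\chi)$ is the Langlands quotient of $\operatorname{Ind}_P^G(\eta\tau)$, where $\tau=\pi_M(\chi^u)$ is the tempered $(K\cap M)$-spherical irreducible representation of $M$ and $\eta$ extends $|\chi|$ to a character of $M$ positive with respect to $N$. Correspondingly, on the Galois side $\sigma(\chi)=\sigma(|\chi|)\,\sigma(\chi^u)$ factors through the inclusion ${}^LT\hookrightarrow {}^LM$, with $\sigma(|\chi|)$ arising as the parameter of the unramified character $\eta$ of $M$, with image in $Z({}^LM^\circ)$. By equation \eqref{eq:Lmult} (valid thanks to Theorem~\ref{TLC}) and the shift formula of Section~\ref{A:shift},
\[
L(s,\pi(\chi),r)=\prod_i L\bigl(s+\langle\eta,\delta_{r_{M,i}}\rangle,\tau,r_{M,i}\bigr),
\]
where $r|_{{}^LM}=\bigoplus_i r_{M,i}$ is the (semisimple) decomposition into irreducibles. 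A parallel computation on the Galois side, using that $r_{M,i}$ acts on $\iota_M\circ\sigma(|\chi|)(\operatorname{Frob})\in Z({}^LM^\circ)$ by the scalar dictated by its central cocharacter $\delta_{r_{M,i}}$, yields
\[
L(s,r\circ\iota\circ\sigma(\chi))=\prod_i L\bigl(s+\langle\eta,\delta_{r_{M,i}}\rangle,\,r_{M,i}\circ\iota_M\circ\sigma(\chi^u)\bigr).
\]
The desired identity therefore reduces to the tempered unramified case, namely $L(s,\tau,r_{M,i})=L(s,r_{M,i}\circ\iota_M\circ\sigma(\chi^u))$ for each $i$.

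For this base case, the $\gamma$-factors already coincide as rational functions in $t=q^{-s}$ by the design of the LS method for unramified generic representations. To promote this to equality of $L$-factors via the coprime extraction of Section~\ref{subsec:LSreview}, one argues that no cancellation occurs on either side: on the LS side, Theorem~\ref{TLC} applied to $\tau$ and to $\tilde\tau$ yields holomorphy of $L(s,\tau,r_{M,i})$ and $L(s,\tilde\tau,r_{M,i})$ on $\operatorname{Re}(s)>0$, so the two polynomial contributions predicted by \eqref{eq:gammaepsilonL-func} share no root; on the Galois side, the eigenvalues of $r_{M,i}\circ\iota_M\circ\sigma(\chi^u)(\operatorname{Frob})$ have absolute value $1$ by unitarity of $\chi^u$, while those appearing in the dual contribution to $\gamma$ have absolute value $q>1$. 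Hence the unique coprime form of $\gamma$ is the same on both sides, and the extracted $L$-factors agree. The main obstacle is precisely this tempered unramified base case: although matching of $\gamma$ is automatic, Theorem~\ref{TLC} is indispensable for ruling out spurious cancellation in the extraction procedure; once it is available, the rest is identical bookkeeping of multiplicativities and shifts on the two sides.
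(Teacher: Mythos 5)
Your proof is correct and follows essentially the same three-stage strategy as the paper: reduce to the tempered base case via Proposition~\ref{prop:unrtemp}, account for the twist by $\eta$ via the shift formula of Section~\ref{A:shift}, and then assemble over the components $r_{M,i}$ using \eqref{eq:Lmult}. One minor misemphasis: \eqref{eq:Lmult} is simply the \emph{definition} of the $L$-function for a general (non-tempered) generic $\pi$ via the Langlands classification, so its use here does not rely on Theorem~\ref{TLC}; and for the tempered unramified base case the paper's argument hinges only on the Galois-side observation that $r_{M,i}\circ\iota_M\circ\sigma(\chi^u)$ decomposes into unitary characters (which you also supply), so the additional LS-side invocation of Theorem~\ref{TLC} there is a harmless redundancy rather than the indispensable input you describe.
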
  

\begin{proof}
 We follow the definition of $L$-functions in Section \ref{s:L-function}, and proceed along the Langlands classification applied to $\pi(\chi)$, as explained in Section \ref{subsec:unrmaxcpt}, of which we use the notation.

The first thing is to prove
\[ L(s, \pi_M(\chi^u), r')=L(s, r'\circ \iota_M \circ \sigma(\chi^u)) \]
for any irreducible component $r'$ of the restriction of $r$ to ${}^LM$, where  $\iota_M$ is the inclusion of ${}^LT$ in ${}^LM$. The equality is true for the $\gamma$-factor on both sides, by the very construction of $\gamma$-factors. Moreover $r'\circ\iota_M \circ \sigma(\chi^u)$ is a direct sum of unitary characters of $W_F$, so there can be no cancellation between numerator and denominator of the $\gamma$-factors of those unitary characters. The equality for the $L$-functions follow from the equality of $\gamma$-factors, and the definition of the LS $L$-functions.

The second stage consists in proving
\[ L(s, \eta\pi_M(\chi^u), r')=L(s, r'\circ \iota_M \circ \sigma(\chi)). \]
In Section \ref{A:shift} above  we showed that  
\[ L(s, \eta\pi_M(\chi^u), r')=L(s+\langle \eta,\delta_{r'}\rangle, \pi_M(\chi^u), r'). \]
On the other hand $\iota_M \circ \sigma(\chi)=\sigma (\eta)(\iota_M \circ \sigma(\chi^u))$ where $\sigma(\eta)$ is the (unramified) character of $W_F$ into the connected centre of ${}^LM$ corresponding to $\eta$ via the LLC for (split) tori. Then  $r'(\iota_M \circ \sigma(\chi))$ is the twist by the unramified character $r'(\sigma(\eta))$ of $r'(\iota_M\circ \sigma(\chi^u))$. By section \ref{LS:lc:gamma}, we then get $L(s, r'(\sigma(\eta)(\iota_M \circ \sigma(\chi^u))=L(s+\langle\delta_{r'},\eta\rangle, r'(\iota_M \circ \sigma(\chi^u))$ which is what we want.

The final third stage is to remark that $L(s, \pi(\chi), r)$ is the product of $L(s, \eta\pi_M(\chi^u), r_{M,i})$ where the restriction of $r$ to ${}^LM$ is written as 
a direct sum of irreducible representations $r_{M,i}$, and similarly
$L(s, r \circ \iota  \circ \sigma(\chi))$ is the product of the $L(s, r_{M,i} \circ \iota_M \circ \sigma(\chi))$. That implies the proposition.
\end{proof}

\section{Unramified unitary dual}\label{s:nr:non-uni}

Throughout this section we focus on split classical groups. In this situation, we specify for each classical group in Section \ref{classical:defs}, a general linear group ${\rm GL}_m$ and a classical group ${\bf G}$ of rank $n$ as a maximal Levi subgroup ${\bf M} = {\rm GL}_m \times \bf G$ of a maximal parabolic subgroup $\bf P$ inside an ambient classical group ${\bf H}$ of the same kind and of rank $m+n$.

In this section we prove Theorem \ref{thm:non-uni} in characteristic zero, which we write below as Theorem \ref{thm:zero:non-uni}. Sections \ref{classical:defs} -- 
\ref{sec:unreven} are written for any non-Archimedean local field $F$. In Section \ref{s:Kazhdan}, we transfer the characteristic 0 result to characteristic $p$.

\subsection{Unitary unramified dual} We call unramified a smooth irreducible representation of ${\bf G}(F)$ with non-zero vectors fixed by ${\bf G}(\mathcal O_F)$, and similarly for ${\bf H}(F)$ and ${\rm GL}_{m}(F)$.
 
\begin{theorem}\label{thm:zero:non-uni}
Let $F$ be of characteristic zero. Let $\tau$ be a tempered smooth irreducible unramified representation of ${\rm GL}_m(F)$ and $\pi$ a unitary smooth irreducible unramified representation of ${\bf G}(F)$. Let s be a complex parameter with $\operatorname{Re}(s)>1$. Then the unramified component of
\begin{equation}\label{eq:ind:classical}
I(s,\tau \otimes \pi) = \operatorname{Ind}_{P}^{H}(|\det|^s\tau \otimes \pi)
\end{equation}
is not unitary.
\end{theorem}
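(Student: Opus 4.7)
The strategy is to translate the statement into one about Satake parameters and then invoke the Muić--Tadić classification of the unramified unitary dual for split classical groups.

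Since $\tau$ is a tempered irreducible unramified representation of $\operatorname{GL}_m(F)$, it is the full normalized parabolic induction $\chi_1 \times \cdots \times \chi_m$ from a unitary unramified character $\chi_j = \nu^{it_j}$, $t_j \in \mathbb{R}$, of the diagonal torus. Hence $|\det|^s \tau \cong \nu^{s+it_1} \times \cdots \times \nu^{s+it_m}$, and by induction in stages $I(s,\tau\otimes\pi)$ has the same Jordan--H\"older constituents as $\operatorname{Ind}_{P'}^{H}(\nu^{s+it_1} \otimes \cdots \otimes \nu^{s+it_m} \otimes \pi)$, where ${\bf P}'$ is the parabolic subgroup of ${\bf H}$ with Levi $(\operatorname{GL}_1)^m \times {\bf G}$. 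In particular, the unique unramified irreducible constituent is common to both. Writing $\pi = \pi(\omega)$ for an unramified character $\omega$ of the maximal split torus $T_{\bf G}$ of ${\bf G}$, defined modulo $W_{\bf G}$, and using that the maximal split torus of ${\bf H}$ is $T_{\bf H} \cong (\operatorname{GL}_1)^m \times T_{\bf G}$, this common constituent is, by transitivity of induction, the spherical representation $\pi(\xi)$ of $H$ attached to
\[
\xi = \nu^{s+it_1} \otimes \cdots \otimes \nu^{s+it_m} \otimes \omega,
\]
taken modulo the action of $W_{\bf H}$.

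I now appeal to Muić and Tadić \cite{MuTa2011}, building on Muić's earlier classification \cite{Mu2006} of unramified irreducible representations: a spherical representation $\pi(\xi)$ of a split classical group is expressed as a Langlands quotient attached to a system of segments $[a_i,b_i]$ together with a tempered spherical base, and unitarity is characterized by explicit constraints on the admissible shift parameters for these segments. Granting that $\pi(\omega)$ is unitary, I will check that appending the exponents $\nu^{s+it_j}$, each of real part $\operatorname{Re}(s) > 1$, to the Satake datum of $\pi(\omega)$ produces either a new segment or a prolongation of an existing one whose shift parameter exceeds $1$, which lies strictly outside the Muić--Tadić unitary region. Since the Weyl group $W_{\bf H}$ acts on the coordinates of $T_{\bf H}$ only by permutations and sign changes, the real parts of the exponents, taken up to sign as a multiset, are $W_{\bf H}$-invariant, so no representative of $\xi$ escapes the violation. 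This contradicts the unitarity of $\pi(\xi)$.

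The main obstacle is the combinatorial case analysis inside the Muić--Tadić classification: one must analyse how the new coordinates $\nu^{s+it_j}$ couple with the segments of $\omega$ and with its tempered spherical base across the split classical types $B_n$, $C_n$, $D_n$, and verify that the region $\operatorname{Re}(s) > 1$ lies outside the admissible range of shift parameters. A secondary, more routine point is justifying that the unique unramified irreducible constituent is preserved under induction in stages from ${\bf P}$ to ${\bf P}'$; this follows from the spherical Hecke algebra or Jacquet-module description of unramified components of principal series, in the spirit of the discussion around Proposition \ref{prop:unrtemp}.
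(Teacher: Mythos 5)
Your strategy is the same as the paper's: pass to Satake parameters, reduce by induction in stages to a principal series of $H$, and invoke the Mui\'c--Tadi\'c classification of the unramified unitary dual. However there are two real gaps in the execution.

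First, you flag as the ``main obstacle'' a combinatorial case analysis across types $B_n$, $C_n$, $D_n$ of how the new exponents $\nu^{s+it_j}$ might ``couple with'' or ``prolong'' existing segments of $\pi(\omega)$. The paper shows this never arises, because the Mui\'c parametrization is by an \emph{unordered multiset} of triples $E=\{(r,\xi,\alpha)\}$ together with a negative representation $\rho$, and the map $(E,\rho)\mapsto\pi(E,\rho)$ is a \emph{bijection} onto unramified irreducibles: $\pi^+$ determines $\rho$ and the multiset $E$ uniquely, with no further normalization or merging of segments. So appending the triples $(1,\eta_i,t)$ with $t=\operatorname{Re}(s)$ (after absorbing $\operatorname{Im}(s)$ into the unitary characters $\eta_i$) to $E$ literally produces the Mui\'c datum $E'$ of the unramified component, by induction in stages. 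Since Theorem 0-8 of \cite{MuTa2011} gives the \emph{necessary} condition $\alpha<1$ for every triple in $E$ whenever $\pi(E,\rho)$ is unitary, and $t>1$, the conclusion is immediate. No case-by-case analysis of segments is needed; there is nothing to ``check'' once the classification and its unitarity criterion are quoted in the correct form.

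Second, and more seriously, you omit the even orthogonal case. The Mui\'c and Mui\'c--Tadi\'c classifications are stated for $\operatorname{O}_{2n}(F)$, not $\operatorname{SO}_{2n}(F)$, and both the notions of ``unramified'' and ``unitary'' must be transported between these groups before the criterion can be applied. The paper devotes a subsection to this (the construction of $\pi^+(\xi)$ and Proposition~\ref{prop:unrunitary}): one shows that the unique unramified component $\pi^+(\xi)$ of the $\operatorname{O}_{2n}$-induction restricts to $\operatorname{SO}_{2n}(F)$ in a way compatible with unitarity, so that non-unitarity of $\pi^+(\xi)$ implies non-unitarity of the unramified component of the $\operatorname{SO}_{2n}$-principal series. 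Without this step your proof does not cover the case ${\bf G}=\operatorname{SO}_{2n}$, ${\bf H}=\operatorname{SO}_{2m+2n}$.
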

 
\begin{remark}
 The theorem remains true if we change our choice of  hyperspecial maximal compact subgroup.
\end{remark}

The proof relies on known classifications, for general linear groups and the split classical groups: that of smooth irreducible unramified representations and that of unitary unramified ones. For the classical groups, the first classification is established only when the characteristic of $F$ is not $2$, and the second one only for $F$ of characteristic $0$.

\subsection{Split classical groups}\label{classical:defs}

In this section, we specify the groups appearing in Theorem \ref{thm:zero:non-uni}, making the notation more precise in order to handle the subtle differences between the classical groups. For the group $\bf G$ we write ${\bf G}_n$ if we wish to specify that it is of rank $n$, namely ${\bf G}_n$ is either ${\rm SO}_{2n}$, ${\rm SO}_{2n+1}$ or ${\rm Sp}_{2n}$. Similarly, for the ambient group $\bf H$ we take ${\bf G}_{n+m}$ of rank $n+m$, a split classical group of the same kind as ${\bf G}_n$.

Let $n$ be a positive integer. We consider on the $F$-vector space $V=F^{n} \oplus F^{n}$ the quadratic form given by
\begin{equation} Q(x_1,\dots ,x_{2n}) =x_1x_{2n}+x_2x_{2n-1}+\cdots+x_{n-1}x_{n+2}+x_{n}x_{n+1}, \end{equation}
where the $x_i$’s are in $F$, and the symplectic form $\omega$ associated to the following matrix in the canonical basis:
\[ \begin{pmatrix} \phantom{-}0_n & J_n \\
-J_n& 0_n\end{pmatrix}, \]
where the $0_n$ is the zero $n\times n$ matrix and $J_n=(\delta_{i,n+1-j})$ is the $n\times n$ matrix with $1$'s along the anti-diagonal and $0$'s elsewhere. We denote by ${\rm SO}_{2n}$ the connected component of the group of  automorphisms of $Q$ and by ${\rm Sp}_{2n}$ the group of automorphisms of $\omega$.

We also consider on the $F$-vector space $V=F^{n}\oplus F \oplus F^{n}$ the quadratic form given by
\begin{equation} Q(x_1,\dots x_n,x,x_{n+1},\dots ,x_{2n}) =x_1x_{2n}+x_2x_{2n-1}+\cdots+x_{n}x_{n+1}+x^2, \end{equation}
where the $x_i$’s and $x$ are in $F$. In this case, we denote by ${\rm SO}_{2n+1}$ the connected component of the group of  automorphisms of $Q$.

Given a split classical group ${\bf G}_n$,  we let ${\bf T}$ be the maximal torus of ${\bf G}_n$ preserving each line $Fe_i$ (where $(e_1,\dots,e_{n})$ is the canonical basis of the first copy of $F^{n}$ in $V$, and $(e_{n+1},\dots,e_{2n})$ that of the second copy). We let ${\bf  B}$ be the Borel subgroup of ${\bf G}_n$ preserving the flag of subspaces $V_i$ of $V$, where $V_i$ for $i=1,\dots,n$ has the basis $(e_1,\dots, e_i)$; we write ${\bf  U}$ for its unipotent radical.

 Furthermore, given another positive integer $m$, we consider the quadratic form $Q'$ on $W=F^m\oplus V \oplus F^m$ given by
 \begin{equation}
 Q'(y_1,\dots,y_m, v, y_{m+1},\dots,y_{2m})=Q(v)+y_1y_{2m}+\cdots+y_my_{m+1},
\end{equation}
and the symplectic form $\omega'$ associated to the matrix \[ \begin{pmatrix} \phantom{-}0_{m+n} & J_{m+n} \\
-J_{m+n}& 0_{m+n}\end{pmatrix}. \]
The connected component of the automorphism group of $Q'$ is ${\rm SO}_{2m+2n}$  in the case of ${\rm SO}_{2n}$, ${\rm SO}_{2m+2n+1}$ in the case of ${\rm SO}_{2n+1}$, and the automorphism group of $\omega'$ is ${\rm Sp}_{2m+2n}$. Using this identification, we consider the parabolic subgroup ${\bf P}$ of ${\bf G}_{m+n}$ stabilizing the first copy of $F^m$ in $W$; it contains the Borel subgroup ${\bf B}$, constructed as above, and it has the Levi subgroup $\bf M$ stabilizing all three components, which is isomorphic to ${\rm GL}_m \times {\bf G}_{n}$, with ${\rm GL}_m$ acting on the first component and ${\bf G}_{n}$ on $V$.

\subsection{Satake paremetrization} Let us first recall the usual parametrization of smooth irreducible unramified representations of ${\bf G}_{n}(F)$ via unramified characters of ${\bf T}(F)$. That parametrization holds for unramified groups over $F$ of any characteristic.
If $\xi$ is an unramified character of ${\bf T}(F)$, then $\operatorname{Ind}_{B}^{G_{n}} (\xi)$ has a unique unramified irreducible subquotient \[\pi(\xi).\] We will call it the unramified component of $\operatorname{Ind}_{B}^{G_{n}} (\xi)$.  All irreducible unramified smooth representations of ${\bf G}_{n}(F)$ are obtained in this way, with a character $\xi$ which is unique up to the action of the Weyl group of ${\bf T}$ in ${\bf G}_{n}$. Concretely 
\begin{equation*}
    \xi(x_1,\ldots,x_n) = \prod_{i=1}^n z_i^{{\rm ord}_F(x_i)},
\end{equation*}
for $(x_1,\ldots,x_n) \in {\bf T}(F)$ and for some $z_i\in (\mathbb C^\times)^n$.  
In this  case we write $\pi(\xi)$ by \[\pi(z_1,\dots,z_n).\]
Such a description applies to ${\rm GL}_m(F)$, using unramified characters of its diagonal torus $\bf A$, and to ${\bf G}_{n+m}(F)$, using unramified characters of its maximal split torus ${\bf A}\times {\bf T}$. An unramified character of ${\bf A}(F)$ is given by an $m$-tuple of non-zero complex numbers $(y_1,\dots, y_m)$, and we write $\tau(y_1,\dots,y_m)$ for the corresponding unramified irreducible component; 
similarly an unramified character of ${\bf G}_{m+n}(F)$ is given by an $(m+n)$-tuple of non-zero complex numbers $(y_1,\dots, y_m, z_1,\dots, z_n)$, and gives 
the unramified irreducible component 
\[\pi(y_1,\dots,y_m,z_1,\dots, z_n).\]
Since the representation $\tau$ of ${\rm GL}_m(F)$ is supposed to be tempered, it has the form $\tau(y_1,\dots, y_m)$ where the $y_j$ have modulus $1$, and $\tau$ is the full parabolically induced representation.

The representation $\pi$ is supposed to be unitary. To interpret that condition concretely, in terms of the parameters with $\pi=\pi(z_1,\dots,z_n)$, we now assume $F$ of characteristic $0$, to be able to use the results of Mui\'c \cite{Mu2006} and Mui\'c-Tadi\'c \cite{MuTa2011}. Note that in those papers the notion of unramified representation refers to the same choice of hyperspecial maximal compact subgroup as ours,
viz. the group ${\bf G}_{n}(\mathcal O_F)$.

In \cite{Mu2006}  Mui\'c gives a finer description of unramified smooth irreducible representations of ${\bf G}_{2n}(F)$, which obviously also applies to ${\bf G}_{2m+2n}(F)$, in three stages: strongly negative representations, negative representations (which are unitary), general case. The classification of unitary unramified representations in \cite{MuTa2011} uses the classification of \cite{Mu2006}. However we have to be careful in the case where ${\bf G}_n={\rm SO}_{2n}$, indeed  both references consider the group ${\rm O}_{2n}$ instead of our group ${\rm SO}_{2n}$, and an irreducible smooth representation of ${\rm O}_{2n}(F)$ is called unramified if it has non-zero fixed vectors under ${\rm O}_{2n}(\mathcal O_F)$.

\subsection{Unramified representations of even orthogonal groups}\label{sec:unreven}
The major difference with ${\rm SO}_{2n}$ occurs already when $n=1$, and concerns the reducibility of (unramified) principal series. Indeed ${\rm SO}_2(F)$ is a split torus, and a principal series is simply a character $\xi$, in particular irreducible; but ${\rm  O}_2(F)-{\rm SO}_2(F)$ acts on ${\rm SO}_2(F)$ by inversion, so $\xi$ induces irreducibly to ${\rm O}_2(F)$, (to a unitary representation if and only if $\xi$ is unitary), unless its square is trivial, in which case $\xi$ induces to the direct sum of its two extensions to ${\rm  O}_2(F)$, which are both unitary and can be distinguished by their value on the transposition matrix, which is a sign. But ${\rm  O}_2(\mathcal O_F)$ contains that transposition matrix, so if $\xi$ is unramified there is indeed a unique unramified component in the induced representation, where the transposition matrix acts trivially.

That phenomenon persists for all positive integers $n$. Indeed the normalizer of ${\bf  T}$ in ${\rm O}_{2n}$ is twice bigger than the normalizer in ${\rm SO}_{2n}$; for example it contains the transposition matrix $\sigma$ which exchanges $e_n$ and $e_{n+1}$, and acts on an unramified character $\xi$ of ${\bf  T}(F)$ by changing $z_n$ to its inverse in the parameter of $\xi$, yielding a character $\xi^\sigma$. We can inflate $\xi$ to ${\bf B}(F)$, induce first to ${\rm SO}_{2n}(F)$, where, as we discussed previously, the induced representation $I(\xi)$ has a line of ${\rm SO}_{2n}(\mathcal O_F)$-fixed vectors and a unique unramified irreducible component $\pi(\xi)$. We further induce to a representation of ${\rm  O}_{2n}( F)$, that we denote by  \[I^+(\xi).\] It is clear that the restriction of $I^+(\xi)$ to ${\rm SO}_{2n}(F)$ is the direct sum of $I(\xi)$ and $I(\xi^\sigma)$, with the two lines of ${\rm SO}_{2n}(\mathcal O_F)$-fixed vectors, exchanged by $\sigma$. But ${\rm O}_{2n}(\mathcal O_F)$ is generated by ${\rm SO}_{2n}(\mathcal O_F)$ and $\sigma$, hence $I^+(\xi)$ has a unique line of ${\rm  O}_{2n}(\mathcal O_F)$-fixed vectors. Thus,  $I^+(\xi)$ has a unique unramified irreducible component, that we denote by \begin{equation*}
   \pi^+(\xi). 
   \end{equation*}
We note that the direct sum of $\pi(\xi)$ and $\pi(\xi^\sigma)$ occurs as a subquotient of $I^+(\xi)$, with the two factors exchanged by $\sigma$.   If those two factors are not isomorphic then the direct sum is an irreducible component of $I^+(\xi)$ and therefore, it corresponds to the unique unramified irreducible component $\pi^+(\xi)$. If they are isomorphic, then $\pi(\xi)$ extends to ${\rm O}_{2n}(F)$; there are two such extensions, one being the twist of the other by the nontrivial character of ${\rm  O}_{2n}(F)$ trivial on ${\rm SO}_{2n}(F)$, but only one of them, is the unramified irreducible component of $I^+(\xi)$. 

Note that if $\eta$ is an unramifed character of ${\bf  T}(F)$ then $\pi^+(\xi)=\pi^+(\eta)$ if and only if $\pi(\eta)$ is equal to $\pi(\xi)$ or $\pi(\xi^\sigma)$. It follows that $\xi$ goes to $\pi^+(\xi)$ gives all unramified irreducible smooth representations of ${\rm  O}_{2n}(F)$, and that $\pi^+(\xi)$ determines $\xi$ up to the action of the normalizer of ${\bf  T}(F)$ in ${\rm  O}_{2n}(F)$, which acts on the parameters $(z_1,\dots,z_n)$ by permutation of the indices and sending some of the $z_i$’s to their inverses. Therefore, we deduce the following.

\begin{proposition}\label{prop:unrunitary}
The representation $\pi^+(\xi)$ can be unitary only if $\pi(\xi)$ is; if that is the case, then $\pi(\xi^
\sigma)$ is also unitary, and so is $\pi^+(\xi)$.
\end{proposition}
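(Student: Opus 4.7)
The plan is to exploit the explicit description of $\pi^+(\xi)\vert_{{\rm SO}_{2n}(F)}$ given in the paragraphs just before the statement, together with Schur's lemma, uniqueness (up to positive scalar) of an invariant hermitian form on an irreducible smooth representation, and the relation $\sigma^2 = 1$ in ${\rm O}_{2n}(F)$.

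For the forward direction I would restrict any ${\rm O}_{2n}(F)$-invariant positive-definite hermitian form on $\pi^+(\xi)$ to an ${\rm SO}_{2n}(F)$-stable subspace realizing $\pi(\xi)$ inside $\pi^+(\xi)$: such a subspace is either all of $\pi^+(\xi)$ (when $\pi(\xi) \simeq \pi(\xi^\sigma)$) or one of the two isotypic summands (otherwise). The restricted form is ${\rm SO}_{2n}(F)$-invariant and positive-definite, so $\pi(\xi)$ is unitary. The fact that $\pi(\xi)$ unitary implies $\pi(\xi^\sigma)$ unitary is then formal, since conjugation by $\sigma$ is an automorphism of ${\rm SO}_{2n}(F)$ sending $\xi$ to $\xi^\sigma$ on ${\bf T}(F)$ and hence transports any unitary structure on $\pi(\xi)$ to one on $\pi(\xi^\sigma)$.

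For the converse I would build an ${\rm O}_{2n}(F)$-invariant positive-definite hermitian form on $\pi^+(\xi)$ from a unitary structure on $\pi(\xi)$, splitting along the same dichotomy. When $\pi(\xi) \not\simeq \pi(\xi^\sigma)$, write $\pi^+(\xi)\vert_{{\rm SO}_{2n}(F)} = V_1 \oplus V_2$ with $V_1 \simeq \pi(\xi)$ and $V_2 \simeq \pi(\xi^\sigma)$, set $T = \pi^+(\xi)(\sigma)\vert_{V_1}\colon V_1 \to V_2$ (so $\pi^+(\xi)(\sigma)\vert_{V_2} = T^{-1}$ because $\sigma^2 = 1$), transport a unitary form $\langle\cdot,\cdot\rangle_1$ on $V_1$ to $\langle v,w\rangle_2 := \langle T^{-1}v, T^{-1}w\rangle_1$ on $V_2$, and form the orthogonal sum. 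The intertwining relation $T^{-1}\pi^+(\xi)(g) = \pi(\xi)(\sigma g\sigma^{-1})T^{-1}$ for $g \in {\rm SO}_{2n}(F)$ makes $\langle\cdot,\cdot\rangle_2$ invariant, and $\sigma^2 = 1$ makes the orthogonal sum $\sigma$-invariant, hence ${\rm O}_{2n}(F)$-invariant. When $\pi(\xi) \simeq \pi(\xi^\sigma)$, the restriction $\pi^+(\xi)\vert_{{\rm SO}_{2n}(F)}$ is simply $\pi(\xi)$; the sesquilinear form $(v,w) \mapsto \langle \pi^+(\xi)(\sigma)v, \pi^+(\xi)(\sigma)w\rangle$ is again ${\rm SO}_{2n}(F)$-invariant and positive-definite on $\pi(\xi)$, so equals $k\langle v,w\rangle$ for a positive real $k$ by uniqueness, and $\sigma^2 = 1$ forces $k^2 = 1$, hence $k = 1$. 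Thus any unitary form on $\pi(\xi)$ is automatically $\sigma$-invariant and therefore ${\rm O}_{2n}(F)$-invariant.

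I expect the main subtle point to be the verification that the transported form on $V_2$ in the non-isomorphic case is ${\rm SO}_{2n}(F)$-invariant; this hinges on identifying the ${\rm SO}_{2n}(F)$-action on $V_2$ with the $\sigma$-conjugate of the ${\rm SO}_{2n}(F)$-action on $V_1$ via $T^{-1}$, after which everything reduces to elementary bookkeeping with $\sigma^2 = 1$ and Schur's lemma.
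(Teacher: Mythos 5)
Your proposal is correct and takes essentially the same approach as the paper: the paper's ``proof'' consists of the preceding paragraph describing $\pi^+(\xi)\vert_{{\rm SO}_{2n}(F)}$ (direct sum of $\pi(\xi)$ and $\pi(\xi^\sigma)$ when they are non-isomorphic, equal to $\pi(\xi)$ when they coincide) followed by ``Therefore, we deduce the following,'' and your argument is precisely the Clifford-theoretic bookkeeping with $\sigma$, $\sigma^2=1$, and uniqueness of invariant hermitian forms that makes this deduction explicit.
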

Proposition \ref{prop:unrunitary}  means that a classification of unitary smooth irreducible unramified representations of ${\rm  O}_{2n}( F)$ can be directly applied to ${\rm SO}_{2n}(F)$ instead. Moreover, a criterion of irreducibility of $\pi(\xi)$ in terms of $\xi$ has to be unsensitive to replacing $\xi$ with $\xi^\sigma$.

\subsection{Unitary dual in characteristic zero} We write ${\bf G}_n^+$ to denote ${\rm O}_{2n}$ in the special case of ${\bf G}_n = {\rm SO}_{2n}$, and ${\bf G}^+_n={\bf G}_n$ in the two other cases.

We now recall what we need of the classifications of \cite{Mu2006}  and \cite{MuTa2011}, following the introduction of \cite{MuTa2011}. The classification of general smooth irreducible unramified representations of ${\bf G}_n^+(F)$ in terms of negative ones will be enough for us.

By \cite{Mu2006}  (see \cite{MuTa2011} definition 0-6 and surrounding comments) that general case is as follows: one considers a multiset $E$ of triples $(r, \xi, \alpha)$ where $r$ is a positive integer, $\xi$ a unitary unramified character of $F^\times$ and $\alpha$ a positive real number, and a negative representation $\rho$ of ${\bf G}^+_{l}(F)$. Then one attaches to those data the unique irreducible unramified component $\pi(E, \rho)$ of the representation of ${\bf G}^+_{n}(F)$ parabolically induced from tensor (over triples) $(\nu^\alpha \xi)(r)\otimes \rho$, where $\eta(r)$ for a character $\eta$ of $F^\times$ is the character $\eta \circ \det$ of ${\rm GL}_r(F)$. In that manner we get all irreducible unramified representations $\pi^+$ of ${\bf G}^+_{n}(F)$, up to isomorphism (the way we order the triples to construct the tensor product does not matter), and the representation $\pi^+$ determines $\rho$ (up to isomorphism) and the multiset of triples $E$.  

Now if $\pi(E, \rho)$ is unitary, then (\cite{MuTa2011}, second assertion of Theorem 0-8 and definition 0-7)
\begin{equation}\label{def:triplesunitary}
    \text{ for all triples  $(r, \xi, \alpha)$ in $E$ we have $\alpha<1$}
\end{equation} 
(\cite{MuTa2011}  gives necessary and sufficient conditions for $\pi(E, \rho)$ to be unitary, but we do not need them). Now we are in position to give the proof of the proposition when the characteristic of $F$ is zero.

\begin{proof}[Proof of Theorem \ref{thm:zero:non-uni}] First, we recall the notation of the theorem: ${\bf G}={\bf G}_n$,  ${\bf H}={\bf G}_{n+m}$ and ${\bf M}={\rm GL}_m\times{\bf G}_n$ (See Section \ref{classical:defs}). Moreover, we set ${\bf G}^+$ to be ${\bf G}^+_n$ and ${\bf H}^+$ to be ${\bf G}^+_{n+m}$.  Our representation $\pi$ of ${\bf G}(F)$ has the form $\pi(\xi)$ for some unramified character $\xi$ of ${\bf  T}(F)$. We let $\pi^+(\xi)$ be the representation constructed in Section \ref{sec:unreven}, above, in the case ${\bf G}={\rm SO}_{2n}$ and simply $\pi(\xi)$ in the other cases. The representation $\pi^+(\xi)$ of ${\bf  G}^+(F)$ (which is unramified unitary) has the preceding form $\pi(E, \rho)$. We also have that the unramified tempered representation $\tau$ of ${\rm GL}_m(F)$ given by $m$ unramified unitary characters.

We want to consider the unramified irreducible component $\pi'$ of the representation of ${\bf H}^+(F)$ parabolically induced by $\nu^s\tau \otimes  \pi^+(\xi)$, where $t=\operatorname{Re}(s)>1$. We see $\nu^s\tau$ as given by $\nu^t\eta_1,\dots,\nu^t\eta_m$ for unitary unramified characters $\eta_j$ of $F^\times$. Then $\pi'$ is simply $\pi(E', \rho)$ where $E'$ is obtained from $E$ by adding the triples $(1,\eta_i, t)$ for $i=1$ to $m$. Indeed using parabolic induction in stages, we see that $\pi(E', \rho)$ is the unramified component of the induction of $\nu^s\tau\otimes \pi(E,\rho)$. It follows  that $\pi'$ does not satisfy the condition \eqref{def:triplesunitary} above, so cannot be unitary. Furthermore, since in the case where ${\bf G}^+ ={\rm O}_{2n}$,  the components of the restriction of $\pi'$ to ${\rm SO}_{2n}(F)$ cannot be unitary either by Proposition \ref{prop:unrunitary}, thus we obtain that the unramified component of $\operatorname{Ind}_{P}^{G_{m+n}}(|\det|^s\tau \otimes \pi)$ is not unitary.
\end{proof}

\section{\`A la Kazhdan}\label{s:Kazhdan}

 In this final section we assume that $\bf G$ is split, and use the method of close local fields \`a la Kazhdan to deduce Theorem \ref{thm:non-uni}. In characteristic 0, it is Theorem~\ref{thm:zero:non-uni} of the previous section. The principle is well-known; essentially, all we need has been established by Ganapathy in \cite{Ga2015} and used by her to prove the local Langlands correspondence for ${\rm GSp}_4(F)$ when $F$ has positive characteristic. We thus proceed with the same spirit of the Kahzdan philosophy in Section  \ref{ss:proof:p:non-uni}, which permits the transfer of Theorem 
 \ref{thm:zero:non-uni} to positive characteristic, hence concluding the proof of Theorem \ref{thm:non-uni}.

\subsection{} \label{subsec:Kazhdan}
Assume that $\bf G$ is split. It then comes from a Chevalley group scheme over $\mathbb{Z}$, which we also write $\bf G$. We choose ${\bf T} = {\bf S}$, $\bf B$, $\bf U$ to be group schemes over $\mathbb{Z}$ as well.

The subgroup $K = {\bf G}(\mathcal{O}_F)$ is a hyperspecial maximal compact subgroup of $G = {\bf G}(F)$, and the quotient of $K$ by its pro-$p$ radical $K_1$ identifies with ${\bf G}(\kappa_F)$. The inverse image in $K$ of ${\bf B}(\kappa_F)$ is an Iwahori subgroup $I$ of $G = {\bf G}(F)$. The profinite group $I$ has a filtration by open normal pro-$p$ subgroups $I_m$, for positive integers $m$, and $I_1$, the inverse image in $K$ of ${\bf U}(\kappa_F)$, is the pro-$p$ radical of $I$. We write $\mathcal{H}_m$ for the Hecke algebra of $I_m$ in $G$.

Let ${\rm Rep}(G)$ be the category of smooth (complex) representations of $G$ and ${\rm Rep}_m(G)$ the subcategory of representations generated by their vectors fixed under $I_m$; ${\rm Irrep}(G)$ and ${\rm Irrep}_m(G)$ denote the irreducible representations, respectively. By Proposition 3.15 of \cite{Ga2015}, ${\rm Rep}_m(G)$ is a direct factor of ${\rm Rep}(G)$ and the functor
\[\pi\mapsto \pi^{I_m}\]
of fixed points under $I_m$ yields an equivalence of categories between ${\rm Rep}_m(G)$ and the category of modules over the Hecke algebra $\mathcal{H}_m$. Moreover, by a result of Ciubotaru, Corollary 1.3 of \cite{Ci2018}, an irreducible smooth representation in ${\rm Rep}_m(G)$ is unitary if and only if the corresponding module over $\mathcal{H}_m$ (which is irreducible) is unitary.

Let $F'$ be another locally compact non-Archimedean field, with residue characteristic $p$. Given a positive integer $m$, assume that $F$ and $F'$ are $m$-close in the sense of Kazhdan. More precisely, one is given a ring isomorphism $\Lambda$ of $\mathcal{O}_F / \mathfrak{p}_F^m$ onto $\mathcal{O}_{F'} / \mathfrak{p}_{F'}^m$. Now, for the ease of writing, set $G' = {\bf G}(F')$ and denote the Hecke algebra over $F'$ by $\mathcal{H}'$; and similarly for $I'$, $I_m'$, etc. Furthermore, choose a uniformizer $\varpi$ of $F$ and a uniformizer $\varpi'$ of $F'$ such that $\Lambda(\varpi) \equiv \varpi' \pmod{ \mathfrak p_{F'}^n}$. Then, in Theorem 3.13 of \cite{Ga2015}, Ganapathy derives from $\Lambda$, and the choices of $\varpi$ and $\varpi'$, a ring isomorphism $\zeta_m$ of $\mathcal{H}_m$ onto $\mathcal{H}_m'$.

Now, from the ring isomorphism $\zeta_m$ there follows an equivalence of categories of ${\rm Rep}_m(G)$ onto ${\rm Rep}_m(G')$. If $\pi \in {\rm Rep}_m(G)$, $\pi' \in {\rm Rep}_m(G')$, we say that $\pi$ and $\pi'$ correspond if the associated Hecke algebra modules are isomorphic via $\zeta_m$ i.e. we have an isomorphism of vector spaces $\kappa_m$, such that  for every $f\in \mathcal{H}_m$, we have the following commuting diagram. 
\[\begin{tikzcd} \pi^{I_m} \arrow[r,"f"] \arrow[d,"\kappa_m"'] & \pi^{I_m} \arrow[d,"\kappa_m"]\\
\pi'^{I'_m}\arrow[r,"\zeta_m(f)"']& \pi'^{I'_m}
\end{tikzcd}\]
 Then, up to isomorphism, $\pi'$ is determined by $\pi$ (and conversely).

Let $\pi \in {\rm Irrep}(G)$, with corresponding $\pi' \in {\rm Irrep}(G')$. Looking at growth conditions on coefficients, one easily proves that $\pi$ is cuspidal (resp. square integrable) if and only if $\pi'$ is, Theorem 4.6 of \cite{Ga2015}. Moreover, if they are square integrable, then for Haar measures giving volume 1 to Iwahori subgroups, the formal degrees (which are real numbers) for $\pi$ and $\pi'$ are the same, Corollary 4.5 of [\emph{loc.\,cit.}]. Now, the result mentioned above by Ciubotaru \cite{Ci2018}, is used in the proof of Lemma~5.2 of \cite{Lom2019} in order to obtain that
\begin{equation}\label{eq:Kazuni}
    \text{$\pi$ is unitary if and only if $\pi'$ is.}
\end{equation}

The algebra $\mathcal{H}_m$ contains as a subalgebra the group ring of $I/I_m$, and there is a group isomorphism of $I/I_m$ onto $I'/I_m'$. By restriction, the isomorphism $\zeta_m$ gives an isomorphism of that group ring with the group ring of $I'/I_m'$. For a positive integer $l < m$, the characteristic funcion of $I_l/I_m$ is sent to the characteristic function of $I_l'/I_m'$, and it follows that $\pi$ belongs to ${\rm Rep}_l(G)$ if and only if $\pi'$ belongs to ${\rm Rep}_l(G')$. Also, using property (a) of Theorem 3.13 of \cite{Ga2015} and the notation there applied to $s_i \in S_1$, results in the characteristic function of $K/I_m$ being sent to the characteristic function of $K'/I_m'$. Therefore
\begin{equation} \label{eq:Kazunr}
    \text{$\pi$ is unramified if and only if $\pi'$ is.}
\end{equation}

Let us consider parabolic induction. Following \cite[Section 4.3]{Ga2015}, we assume that $F$ and $F'$ are $4$-close, and choose a ring isomorphism $\Lambda$ of $\mathcal O_F/\mathfrak{p}^4_F$ onto $\mathcal O_{F'}/\mathfrak{p}'^4_{F'}$, and uniformizers $\varpi$, $\varpi'$ of $F$, $F'$ compatible with $\Lambda$. Let ${\bf P}$ be a parabolic subgroup of ${\bf G}$ containing ${\bf B}$, and ${\bf M}$ its Levi subgroup containing ${\bf T}$, ${\bf N}$ its unipotent radical (provisionally those letters do not stand for integers). Then $I_{M}=I \cap {\bf M}(F)$ is an Iwahori subgroup of ${\bf M}(F)$, with Hecke algebra $\mathcal H_M$ and $K_M=K \cap {\bf M}(F)$ a hyperspecial  maximal compact subgroup of ${\bf M}(F)$. We put primes for the corresponding objects over $F'$.
Let $\tau$ be a smooth irreducible  representation representation of ${\bf M}(F)$ with $I_M$ fixed vectors, and $\tau'$ a  representation of ${\bf M}(F')$ that corresponds to $\tau$. Then $\operatorname{Ind}_{{\bf P}(F)}^{{\bf  G}(F)} \tau$ corresponds to $ \operatorname{Ind}_{{\bf  P}(F')}^{{\bf  G}(F')} \tau'$ (\cite[Lemma 4.10 \& Theorem 4.14]{Ga2015}). We remark that we are applying Theorem 4.14 of \cite{Ga2015} for $m=1$, and that is why we use $(m+3)=4$ close fields.

\subsection{Transfer  à la Kazhdan}\label{ss:proof:p:non-uni}
We transfer the characteristic 0 proof of Theorem \ref{thm:zero:non-uni} of the previous section to the positive characteristic case. We operate that transfer using close local fields à la Kazhdan as in Section \ref{subsec:Kazhdan}.

\begin{proof}[Proof of the Theorem \ref{thm:non-uni} in characteristic $p$]
We now choose $F'$ of characteristic $0$ such that $F$ and $F'$ are $4$-close (that is possible), and apply the considerations of Section \ref{subsec:Kazhdan} to the ambient group ${\bf H}$ with the Levi subgroup ${\bf M}={\rm GL}_m\times{\bf G}$. We have the representation $\tau$ of ${\rm GL}_m(F)$ and the representation $\pi$ of ${\bf G}(F)$, and corresponding representations $\tau'$ and $\pi'$ obtained via the previous process. 

We also have the complex number $s$ with $\operatorname{Re}(s)>1$ and our goal is to show that the unramified irreducible component of the representation of ${\bf H}(F)$ parabolically induced from  $\nu^s\tau\otimes \pi$ is not unitary. By Theorem \ref{thm:zero:non-uni}, the result is true over the characteristic $0$ field $F'$, and it is enough to show that the hypotheses on $\tau$ and $\pi$ transfer to the corresponding hypotheses on $\tau'$ and $\pi'$, and that the result over $F'$ transfers back to $F$. First, we transfer the hypotheses.

The representation $\tau$ is tempered, parabolically induced from the unitary character $\xi= (\xi_1,\dots,\xi_m)$ of ${\bf A}(F)$, and the compatibility with parabolic induction recalled in Section \ref{subsec:Kazhdan} shows that $\tau'$ is induced from the unitary character $(\xi'_1,\dots, \xi'_m)$ with $\xi'_i$ taking the same value as $\xi$ on uniformizers. Moreover, twisting $\tau$ by $\nu^s$ corresponds to twisting $\tau'$ by $\nu'^s$, since twisting by $\nu^s$ amounts to multiplying all $\xi_i$'s by the character $\nu^s$ of $F^\times$. Since the representation $\pi$ is smooth irreducible unitary unramified, properties \eqref{eq:Kazuni} and \eqref{eq:Kazunr} imply that $\pi'$ is also smooth 
irreducible unitary unramified. Furthermore, the representation $I(s,\tau\otimes \pi)$  parabolically induced from $\nu^s\tau \otimes \pi$ corresponds to the representation $I(s,\tau'\otimes \pi')$ parabolically induced from $\nu'^s\tau' \otimes \pi'$. Finally, using again property \eqref{eq:Kazunr}, we have that the unramified irreducible component of $I(s,\tau\otimes \pi)$ corresponds to the unramified irreducible component of $I(s,\tau'\otimes \pi')$.

To finish, we need to transfer back the result over $F'$ to $F$. Indeed, Theorem \ref{thm:zero:non-uni} holds over $F'$, hence the unique unramified irreducible subquotient of $I(s,\tau'\otimes \pi')$ is not unitary. Therefore, property \eqref{eq:Kazuni} implies that the unique unramified irreducible subquotient of $I(s,\tau\otimes \pi)$ is not unitary either, which is what we wanted. \end{proof}

\subsection{Remarks}

It is worth noting that Theorems \ref{TLC} and \ref{SMC} in positive characteristic from the case of characteristic $0$ can be obtained à la Kazhdan, proceeding in the same spirit of the proof of Theorem \ref{thm:non-uni}. Of course, split groups are included in the results of the previous sections, and Theorem~\ref{TLC} in particular is in fact proved this way in Corollary 5.5 of \cite{Lom2019}. However, following the alternate treatment of the two theorems may lead to new insights. The hope is that the comparison of Hecke algebras used above can be extended to the general quasi-split case, where partial progress is being made \cite{Ga2019, Ga2021}.

\appendix

\section{On ${\rm SL}_2$ and ${\rm SU}_{2,1}$}\label{SL:SU}
\setcounter{section}{1}

We study the structure of  unramified principal series representations of ${\rm SL}_2$ and ${\rm SU}_{2,1}$, and the vectors that are fixed by open compact subgroups. In order to do that, we study the Iwahori-fixed vector of these representations, the Iwahori Hecke algebras and the equivalence between Iwahori blocks  and the categories of Iwahori Hecke modules \cite{B1976}.

\subsection{${\rm SL}_2$}
Let ${\bf G} = {\rm SL}_2$ and $G = {\rm SL}_2(F)$. We setup basic objects and notions. Let $\bf B$ the Borel subgroup of ${\bf G}$ consisting of upper triangular matrices and $\bf T$ the subgroup of ${\bf G}$ consisting of diagonal matrices.  In this setup, the modulus character is\[\delta\begin{pmatrix}t & 0\\
0 & t^{-1}\end{pmatrix}=|t|^2_F,\]
where $t\in F^\times$.

 Let $K={\rm SL}_2(\mathcal O
_F)$ and $I$ the Iwahori subgroup contained in $K$ consisting of upper triangular matrices modulo $\mathfrak p_F$. We note that 
\begin{equation}
   K=I\cup IwI,\text{ where } w=
   \begin{pmatrix}
      \textcolor{white}{e,}0&1\ \\
      -1&0 \ 
\end{pmatrix}.
\end{equation}
We also consider the following maximal compact open subgroup of $G$
\[K'=I\cup IsI,\text{ where } s=
\begin{pmatrix}
   0&\varpi \ \\
   -\varpi^{-1}&0 \ 
\end{pmatrix},\]
where $\varpi$ is a uniformizer of $F$.
Finally, let $T_w$ and $T_s$ be the  characteristic functions of $IwI$ and of $IsI$, respectively. In the Iwahori Hecke algebra, we have
\[ (T_w+1)(T_w-q)=0 \quad \text{ and } \quad (T_s+1)(T_s-q)=0. \]

\subsubsection{Iwahori Hecke algebras and eigenspaces}

For any unramified character $\chi:T\to \mathbb \mathbb \mathbb C^\times$, the dimension of $W=\operatorname{Ind}_B^{G}(\chi)^I$ is equal to $2$. The smooth functions from $G$ to $\mathbb C$ characterized by 
\[e(g)=\begin{cases}1& g\in I \\ 0& g \not \in BI\end{cases} \quad \text{ and } \quad  f(g)=\begin{cases}1& g\in wI \\ 0& g \not \in BwI\end{cases} \]
form a basis of $W$. The function $T_w$ and $T_s$ induced two linear maps on $W$.  First, we  focus on $T_w$. This linear map operates on the basis vectors $e,f$ as follows
\[T_w(e)=f \quad \text{ and } \quad  T_w(f)=qe+(q-1)f.\]
From this formula, we have the following eigenspace decomposition: the eigenvalues of $T_w$ are $-1$ and $q$, with associated space given by
\begin{equation}\label{eq:eigenSL2w}
W_{w,-1}=\mathbb C(qe-f) \quad \text{ and } \quad W_{w,q}=\mathbb C(e+f),    
\end{equation}
respectively.

Now we concentrate on $T_s$. In this case, we have
\[T_s(e)=(q-1)e+(q/a)f \quad \text{ and } \quad T_s(f)=ae,\]
where $a=\chi \delta^{1/2}(\operatorname{diag}(\varpi^{-1}_F, \varpi_F))$. Indeed, let us first compute $T_s(f)$. By definition, we have that
\[T_s(f)=\sum_{u(I\cap sIs^{-1})\in I/(I\cap sIs^{-1})}us\cdot f.\]
Thus, we need to identify  $I\cap sIs^{-1}$.
Using the Iwahori decomposition we have that 
\[I={\bf U}(\mathcal O_F){\bf T}(\mathcal O_F)\overline {\bf U}(\mathfrak p_F)\] and that
\[sIs^{-1}= w{\bf U}(\mathfrak p_F^2){\bf T}(\mathcal O_F)\overline {\bf U}(\mathfrak p_F^{-1})w^{-1}={\bf U}(\mathfrak p_F^{-1}){\bf T}(\mathcal O_F)\overline {\bf U}(\mathfrak p_F^2),\] so $I\cap sIs^{-1}={\bf U}(\mathcal O_F){\bf T}(\mathcal O_F)\overline {\bf U}(\mathfrak p_F^2)$. This identification allows us to check that inclusion of $\overline {\bf U}(\mathfrak p_F)$ into $I$ gives a bijection of $\overline {\bf U}(\mathfrak p_F)/\overline {\bf U}(\mathfrak p_F^2)$ onto $I/(I \cap sIs^{-1})$. So, we have that \[T_s(f)(1)= \sum_{u\overline {\bf U}(\mathfrak p_F^2)\in \overline {\bf U}(\mathfrak p _F)/\overline {\bf U}(\mathfrak p_F^2)}f(us).\] 
But if $u$ has bottom row $(\varpi x,1)$, $us$ has bottom row $(\varpi, -x)$, so $us\in BI$ unless $x$ is in $\mathfrak p_F$. For $x\in \mathfrak p_F$, we may as well take $x=0$, that is $u=1$. Thus $T_s(f)(1)=a$ where $a=\chi \delta^{1/2}(\operatorname{diag}(\varpi^{-1}, \varpi))$. On the other hand, we also consider
\[T_s(f)(w)= \sum_{[u]\in \overline U(\mathfrak p _F)/\overline U(\mathfrak p_F^2)} f(wus).\] Since $wus=wuw^{-1}ws\in B$, $T_s(f)(w)=0$. Therefore $T_s(f)=ae$. 
It follows that $T_s(e)=(q-1)e+(q/a)f$, and $T_s(ae+f)=q(ae+f)$ and $T_s(ae-qf)=-(ae-qf)$. 

Now, from these formulas that we just checked, we have the following eigenspace decomposition: the eigenvalues of $T_s$ are $-1$ and $q$, with associated space given by
\begin{equation}\label{eq:eigenSL2s}
W_{s,-1}=\mathbb C(ae-qf) \quad \text{ and } \quad W_{s,q}=\mathbb C(ae+f),    
\end{equation}
respectively.

Thanks to the equivalence between Iwahori blocks  and the categories of Iwahori Hecke modules \cite{B1976}, there is reducibility if and only if $T_w$ and $T_s$ have a common eigenspace. Thus if we combine equations \eqref{eq:eigenSL2w} and \eqref{eq:eigenSL2s}, we check that this happens in three cases:
\begin{enumerate}
    \item $a=1$, with the trivial representation as subrepresentation. Indeed, in this case the subspace $\mathbb C(e+f)$ invariant under the action of the Iwahori Hecke algebra. As the fixed by $I$ vector of the trivial representation is indeed $\mathbb C(e+f)$ and the action of the Iwahori Hecke algebra is given by multiplication by $q$, we have a non-zero morphism  between representations of $G$, from the trivial representation to $i_B^G(\chi)$. 
    \item $a=-q$, with $e+f$ fixed by $K$ and $qe-f$ fixed by $K'$.
    \item $a=q^2$, with the trivial representation as quotient. Indeed, we note that the corresponding $a$ of the the dual representation is equal to $\chi^{-1}\cdot \rho(\operatorname{diag}(\varpi^{-1}_F, \varpi_F))=q^{-1}\cdot q=1$. Thus, from point (1), the dual representation has the trivial representation as subrepresentaion. From this, we obtain that $i_B^G(\chi)$ has the trivial representation as a quotient.
\end{enumerate}
To finish our discussion in the ${\rm SL}_2$ case, we are going to analyze the case when the representations obtained above are generic. Let us fix a non-trivial character $\psi$ of $F$. Let also $\theta=\theta_\psi$ be the character
of $U$  given by
\[\theta_\psi\begin{pmatrix}
   1& x \\
   0&1
\end{pmatrix}=\psi(x), \text{ for }x\in F. \]
We recall a formula for a Whittaker functional $\lambda$ on ${\rm Ind}_B^G \rho^{-1}\eta$ for $\eta$ an
unramified character of the torus $T$. That formula is given as follows, first we fix a Haar measure $du$ on $U$ so that $U(\mathcal O_F)$ has measure equal to $1$. Then the formula for $\lambda$ is 
\[\lambda(\phi)=\int_U\phi(wu)\overline \theta(u)du,\]
where $\phi$ is a function in $\operatorname{Ind}_B^G\rho^{-1} \eta$.
The integral on the right hand side converges absolutely only for functions with support in $BwU$,
and in general it converges conditionally (or ``in principal part") in the
sense that the integrals
on $U(\mathfrak p_F^r)$ are stationary as $r$ tends to $-\infty$. Indeed the functions in ${\rm Ind}_B^G
\rho^{-1}\eta$ are characterized
by their restriction to $wU$, and those restrictions are, as functions of $x$
where the first row of $u$ is $(1,x)$,
all locally constant functions which have the form a constant times
$a^{v_F(x)}$ when $v_F(x)$ tends to $-\infty$;
if that valuation $v=v_F(x)$ is fixed, integrating against $\overline\theta$ gives $0$ as soon
as $\psi$ is non-trivial on $\mathfrak p_F^{v-1}$.

So let us  compute $\lambda(e)$ and $\lambda(f)$ (in terms of our parameter $a$, which in our case is equal to $\eta(\operatorname{diag}(\varpi^{-1},\varpi))$,
even though only $a=-q$
is of interest to us). First we compute $\lambda(f)$. Since $f$ has support in $BwI=BwU(\mathcal O_F)$, included in $BwU$,
one computes immediately $\lambda(f)=1$, if $\psi$ is trivial on $\mathcal O_F$, $0$ otherwise. Now we compute $\lambda(e)$. In order to do it, one needs to see when $wu$ can be in $BI=B\overline U(\mathfrak p_F)$. But
elements of $BI$ are those
matrices with last row $(c,d)$ satisfying $v_F(c)>v_F(d)$, so we need the first
row $(1,x)$ of $u$ to satisfy
$v=v_F(x)<0$. Moreover $e(wu)$ is then $a^v$. If $\psi$ is non-trivial on $\mathcal O_F$, we find
again $\lambda(e)=0$.
So let us assume $\psi$ is trivial on $\mathcal O_F$, but not on $\mathfrak p_F^{-1}$. Then the only
non-zero contribution
to the integral $\lambda(e)$ comes from $v=-1$, where it is $-1/a$.

Now we use these computation to check generecity of the representation obtained from unramified principal series representations.
\begin{enumerate}
    \item In the case where $a=1$, we find  $\lambda(e+f)=0$, which is natural because
the trivial subrepresentation is not generic.
\item In the case where $a=-q$, we find the following: if  $\psi$ is trivial on $\mathcal O_F$, but not on $\mathfrak p_F^{-1}$. Then $\lambda(e+f)$ not zero, thus the $\theta$-generic
subrepresentation  is the one
with $K$-fixed vectors. To finish, we note that by using the action of $T(F)$ we see that when $\psi$ has even
exponent, i.e. is trivial
on $\mathfrak p_F^k$ but not on $\mathfrak p_F^{k-1}$ for even $k$, then the $\theta$-generic
subrepresentation  is again the one
with $K$-fixed vectors. On the other hand, using the action of the normalizer
of $I$ in ${\rm GL}_2$, we find
that when $\psi$ has odd exponent, the $\theta$-generic subrepresentation is
the one with $K'$-fixed vectors.
\end{enumerate}

\subsection{${\rm SU}_{2,1}$}
Let $E$ be a quadratic unramified extension $F$ and $\varpi_F$ a uniformizer of $F$. By definition, we have that $\varpi_F$ is also a uniformizer of $E$. We consider the following hermitian form $h$ on $V=E^3$
\[z_1\overline w_3+z_2\overline w_2+z_3\overline w_1.\]
We note that its associated matrix in the canonical basis $(e_1,e_2,e_3)$ is equal to 
\[\begin{pmatrix}0&0&1 \\
0&1&0 \\
1&0&0 \end{pmatrix}.\]
We denote by ${\rm SU}_{2,1}$ the group of automorphisms of $h$ with determinant $1$. It is a connected reductive group over $F$. Moreover, since $E$ is unramified over $F$, ${\rm SU}_{2,1}$ is unramified. Finally, let ${\bf G}={\rm SU}_{2,1}$ and $G = {\rm SU}_{2,1}(F)$, its $F$-points.

Now, let $\bf T$ be the maximal torus given by the elements of ${\bf G}$ that preserve each line $Ee_i$ and $\bf S$ the maximal split torus in $\bf T$. The $F$-points of the torus $\bf T$ is identified with  $E^\times$ via the following map
\begin{align*}
    d\colon E^\times &\to G\\
    z &\mapsto d_z=\operatorname{diag}(z, \overline z \cdot z^{-1},\overline z^{-1}).
\end{align*} Moreover, the torus $S$ is identified with $F^\times$ via $d$. Now, we let $\bf B$ be the Borel subgroup containing $\bf T$ consisting of element preserving the flag subspaces $V_i$ of $V$, where $V_i$ for $i=1,2,3$ has the basis $(e_1,e_2,e_3)$; we write $\bf U$ for its unipotent radical.  Finally the modulus character is\[\delta(d_z)=|z|^4_E.\] 

 Let $K={\rm SU}_{2,1}(\mathcal{O}_F)$ be a hyperspecial maximal compact subgroup, i.e., the stabilizer of the lattice $\mathcal O_E^3$ of $V$ and $I$ the Iwahori subgroup contained in $K$ consisting of upper triangular matrices modulo $\mathfrak{p}_E$. We note that 
\[K=I\cup IwI,\text{ where } w=\begin{pmatrix}0&\phantom{-}0&1 \\
0&-1&0 \\
1&\phantom{-}0&0\end{pmatrix}.\]
We also consider the following maximal compact subgroup of $G$
\[K'=I\cup IsI,\text{ where } s=\begin{pmatrix}0&\phantom{-}0&\varpi_F^{-1} \\
0&-1&0^{\phantom{-1}} \\
\varpi_F&\phantom{-}0&0^{\phantom{-1}}\end{pmatrix}.\]
We note that $K'$ is the stabilizer of the lattice ${\frac{1}{\varpi_F}}\mathcal{O}_E\oplus \mathcal{O}_E\oplus \mathcal{O}_E$ of $V$. Thus, it is a special, but not hyperspecial \cite[Section 3.11]{Ti1979}.

Finally, let $T_w$ and $T_s$ be the  characteristic functions of $IwI$ and of $IsI$, respectively. In the Iwahori Hecke algebra we have that
\[(T_w+1)(T_w-q^3)=0 \quad \text{ and } \quad (T_s+1)(T_s-q)=0.\]

\subsubsection{Iwahori Hecke algebras and eigenspaces}

For any unramified character $\chi:T\to \mathbb \mathbb \mathbb C^\times$, the dimension of $W=\operatorname{Ind}_B^{G}(\chi)^I$ is equal to $2$. The smooth functions from $G$ to $\mathbb C$ characterized by 
\[e(g)=\begin{cases}1& g\in I \\ 0& g \not \in BI\end{cases} \quad \text{ and }  \quad  f(g)=\begin{cases}1& g\in wI \\ 0& g \not \in BwI\end{cases} \]
form a basis of $W$. The function $T_w$ and $T_s$ induced two linear maps on $W$. First, we are going to focus on $T_w$. This linear map operates on the basis vectors $e,f$ as follows
\[T_w(e)=f \quad \text{ and } \quad  T_w(f)=q^3e+(q^3-1)f.\]
From these formulas, we have the following eigenspace decomposition: the eigenvalues of $T_w$ are $-1$ and $q^3$, with the associated spaces given by
\begin{equation}\label{eq:eigenSUw}
W_{w,-1}=\mathbb C(q^3e-f) \quad \text{ and } \quad W_{w,q^3}=\mathbb C(e+f),    
\end{equation}
respectively.

Now we are going to focus on $T_s$. In this case, we have
\[T_s(e)=(q-1)e+(q/a)f\quad \text{ and } \quad T_s(f)=ae,\]where $a=\chi\cdot \delta^{1/2}(\operatorname{diag}(\varpi^{-1},1,\varpi))$. Indeed, let us first compute $T_s(f)$.  By definition we have that
\[T_s(f)=\sum_{u(I\cap sIs^{-1})\in I/(I\cap sIs^{-1})}us\cdot f.\] Thus, we need to identify $I \cap sI s^{-1}$. Conjugation by $s$ gives the following
\[\begin{pmatrix} 0^{\phantom{-1}}&\phantom{-}0&\varpi_F^{-1} \\
0^{\phantom{-1}}&-1&0 \\
\varpi_F&\phantom{-}0&0 
\end{pmatrix}\begin{pmatrix} x_{11}& x_{12}&x_{13} \\    x_{21}& x_{22}&x_{23}  \\
  x_{31}& x_{32} &x_{33}
    \end{pmatrix} \begin{pmatrix} 0^{\phantom{-1}}&\phantom{-}0&\varpi_F^{-1} \\
0^{\phantom{-1}}&-1&0 \\
\varpi_F&\phantom{-}0&0 
\end{pmatrix}=\begin{pmatrix} \phantom{\varpi_F}x_{11}& -\varpi_F^{-1}x_{21}&\phantom{-}\varpi_F^{-2}x_{31} \\
    -\varpi_F x_{12}&\phantom{\varpi^{-2}} x_{22}&-\varpi_F^{-1}x_{32}  \\
    \varpi_F^2x_{13}& -\varpi_F^{\phantom{-1}}x_{23} &\phantom{-\varpi_F^{-1}}x_{33}
    \end{pmatrix}.\]
 So, if $N$ is the unipotent subgroup given by the bottom left coefficient, the inclusion of $N(\mathfrak{p}_F)$ into $I$ induces a bijection of $N(\mathfrak p_F)/N(\mathfrak p_F^2)$ onto $I/I \cap sI s^{-1}$. It follows that 
\[ T_s(f)(x)= \sum_{vN(\mathfrak p_F^2)\in N(\mathfrak p_F)/N(\mathfrak p_F^2)}f(xvs).\]
Since $wvs\in B$ for every $v\in N(\mathfrak p_F)$, $T_sf(w)=0$ and the only summand that contribute in $T_s(f)(1)$ is from the contribution of $N(\mathfrak p_F^2)$. Thus, we find $T_s(f)(1)=a=\chi\cdot \delta(\operatorname{diag}(\varpi^{-1}_F,1,\varpi_F))$, and $T_s(f)=ae$. Consequently, $T_s(e)=(q-1)e+(q/a)f$. 

From the formula we just checked, we have the following eigenspace decomposition: the eigenvalues of $T_s$ are $-1$ and $q^3$, with associated space given by
    \begin{equation}\label{eq:eigenSUs}
W_{s,-1}=\mathbb C(ae-qf) \quad \text{ and } \quad W_{s,q^3}=\mathbb C(ae+f),
\end{equation}
respectively.

Thanks to the equivalence between Iwahori blocks  and the categories of Iwahori Hecke modules \cite{B1976}, there is reducibility if and only if $T_w$ and $T_s$ have a common eigenspace. Thus, if we combine equations \eqref{eq:eigenSUw} and \eqref{eq:eigenSUs}, we check that this happens in four cases:

\begin{enumerate}
    \item $a=1$ with the trivial representation as a subrepresentation. Indeed, in this case the subspace $\mathbb C(e+f)$ invariant under the action of the Iwahori Hecke algebra. As the fixed by $I$ vector of the trivial representation is indeed $\mathbb C(e+f)$ and the action of the Iwahori Hecke algebra is given by multiplication by $q^3$, we have a non-zero morphism  between representations of $G$, from the trivial representation to $i_B^G(\chi)$.
    \item $a=q^4$ with the trivial representation as a quotient. Indeed, we note that the corresponding $a$ of the the dual representation is equal to $\chi^{-1}\cdot \rho(\operatorname{diag}(\varpi^{-1}_F,1,\varpi_F))=q^{-2}\cdot q^2=1$. Thus, from point (1), the dual representation has the trivial representation as subrepresentaion. Therefore, we obtain that $i_B^G(\chi)$ has the trivial representation as a quotient.
    \item $a=-q$ with a non-zero $K$-fixed vector in the subrepresentation.
    \item $a=-q^3$ with a non-zero $K'$-fixed vector in the subrepresentation. In this case this representation is of discrete series. Indeed, as the restriction to $T$ of this representation is $\chi\cdot\rho$ and $-q^3=\chi\cdot \rho(\operatorname{diag}(\varpi^{-1}_F,1,\varpi_F))=q^2\chi(\operatorname{diag}(\varpi_F,1,\varpi^{-1}_F))^{-1}$, we have that its central exponent is negative. 
\end{enumerate}

\begin{remark}
We note that part (4) of the previous discussion is an example of a phenomenon explained in Remark \ref{rmk:specialdiscrete}.
\end{remark}

\medskip

\noindent{\sc \Small H\'ector de Castillo, Departamento de Matem\'atica y Ciencia de la Computaci\'on, Universidad de Santiago de Chile, Las Sophoras 173, Estaci\'on Central, Santiago}\\
\emph{\Small E-mail address: }\texttt{\Small hector.delcastillo@usach.cl}

\noindent{\sc \Small Guy Henniart, Laboratoire de
Mathématiques d’Orsay, Université-Paris-Saclay \& CNRS, Bâtiment 307, Orsay cedex F-91405, France}\\
\emph{\Small E-mail address: }\texttt{\Small Guy.Henniart@universite-paris-saclay.fr}

\noindent{\sc \Small Luis Alberto Lomel\'i, Instituto de Matem\'aticas, Pontificia Universidad Cat\'olica de Valpara\'iso, Blanco Viel 596, Cerro Bar\'on, Valpara\'iso, Chile}\\
\emph{\Small E-mail address: }\texttt{\Small Luis.Lomeli@pucv.cl}

\end{document}